 \numberwithin{equation}{section}
\renewcommand{\theequation}{\arabic{section}.\arabic{equation}}
\def\open#1{\setbox0=\hbox{$#1$}
	\baselineskip = 0pt \vbox{\hbox{\hspace*{0.4 \wd0}\tiny
			$\circ$}\hbox{$#1$}} \baselineskip = 10pt\!}
\title[a fourth-order nonlinear parabolic equation
] 
{Local and global existence of solutions to a fourth-order parabolic equation modeling kinetic roughening and coarsening in thin films}
\author[Xiangsheng Xu]{}
\subjclass{Primary: 35A01; 35A02; 35A35; 35K55. Secondary: 35D30; 35Q99.}
\keywords{Biharmonic heat kernel;  interpolation inequality; local and global existence of weak solutions; nonlinear fourth order parabolic equations; thin film growth. To appear in Commun. Math. Sci..}
\email{xxu@math.msstate.edu}
\newtheorem{thm}{Theorem}[section]
\newtheorem{prop}[thm]{Proposition}
\newtheorem{lem}[thm]{Lemma}
\newtheorem{defn}[thm]{Definition}
\newtheorem{clm}[thm]{Claim}
\newtheorem{rem}[thm]{Remark}
\newcommand{\sk}{\mbox{$\psi_k $}}
\newcommand{\skx}{\mbox{$\psi_k (x)$}}
\newcommand{\uk}{\mbox{$u_k $}}
\newcommand{\uko}{\mbox{$u_{k-1} $}}
\newcommand{\utj}{\mbox{$\tilde{u}_j $}}
\newcommand{\ubj}{\mbox{$\overline{u}_j $}}
\newcommand{\sbj}{\mbox{$\overline{\psi}_j $}}
\newcommand{\rbj}{\mbox{$\overline{u}_j $}}
\newcommand{\omt}{\mbox{$\Omega_T$}}
\newcommand{\R}{\mathbb{R}}
\newcommand{\ds}{\Delta^2}
\newcommand{\ot}{\Omega_T}
\newcommand{\RN}{\mathbb{R}^N}
\newcommand{\f}{{\bf g}}
\newcommand{\ft}{{\bf g}}
\newcommand{\io}{{\int_{\Omega}}}
\newcommand{\iot}{{\int_{\Omega_T}}}
\newcommand{\gu}{\nabla u}
\newcommand{\du}{\Delta u}
\newcommand{\po}{\partial\Omega}
\newcommand{\rnt}{\RN\times(0,T)}
\newcommand{\dus}{\Delta^2 u}
\newcommand{\g}{{\bf g}}
\newcommand{\h}{{\bf h}}
\newcommand{\fn}{f_N}
\newcommand{\irn}{{\int_{\RN}}}
\newcommand{\irnt}{{\int_{\rnt}}}
\newcommand{\wk}{w_k}
\newcommand{\wko}{w_{k-1}}
\begin{document}
	\maketitle
	
	\centerline{\scshape Xiangsheng Xu}
	\medskip
	{\footnotesize
		\centerline{Department of Mathematics \& Statistics}
		\centerline{Mississippi State University}
		\centerline{ Mississippi State, MS 39762, USA}
	} 

	\bigskip
	

	\begin{abstract}In this paper we study both the Cauchy problem and the initial boundary value problem for the equation $\partial_tu+\mbox{div}\left(\nabla\Delta u-{\bf g}(\nabla u)\right)=0$. This equation  has been proposed as a continuum model for kinetic roughening and coarsening in thin films. In the Cauchy problem, we obtain that local existence of a weak solution is guaranteed as long as the vector-valued function ${\bf g}$ is continuous and the initial datum $u_0$ lies in $C^1(\mathbb{R}^N)$  with $\nabla u_0(x)$ being uniformly continuous and bounded on $\mathbb{R}^N$ and that the global existence assertion also holds true if we assume that ${\bf g}$ is locally Lipschitz and satisfies the growth condition $|{\bf g}(\xi) |\leq c|\xi|^\alpha$ for some $c>0,  \alpha\in (2, 3)$, $\sup_{\mathbb{R}^N}|\nabla u_0|<\infty$,  and the norm of $u_0$ in the space $L^{\frac{(\alpha-1)N}{3-\alpha}}(\mathbb{R}^N) $ is sufficiently small. This is done by exploring various properties of the biharmonic heat kernel. In the initial boundary value problem, we  assume that  ${\bf g}$ is continuous and satisfies the growth condition $|{\bf g}(\xi) |\leq c|\xi|^\alpha+c$ for some $c, \alpha\in (0,\infty)$. Our investigations reveal that if $\alpha\leq 1$ we have global existence of a weak solution, while if $1<\alpha<\frac{N^2+2N+4}{N^2}$	only a local existence theorem can be established. Our method here is based upon a new interpolation inequality, which may be of interest in its own right.

	\end{abstract}
	\section{Introduction.}\label{sec1}
In this paper we first study the initial boundary value problem for the equation mentioned in the abstract. Let $\Omega$ be a bounded domain in $\RN$ with boundary $\partial\Omega$. 
Then the problem can be formulated as follows:
\begin{eqnarray}
\partial_tu+\textup{div}\left(\nabla\Delta u-\f(\nabla u)\right)&=&0\ \ \textup{in $\ot\equiv\Omega\times(0,T)$,}\label{mp1}	\\
\nabla u\cdot\nu&=&0 \ \ \textup{on $\Sigma_T\equiv\partial\Omega\times(0,T)$,}\label{mp2}\\
\left(\nabla\Delta u-\f(\nabla u)\right)\cdot\nu&=&0 \ \ \textup{on $\Sigma_T$,}\label{mp3}\\
u(x,0)&=&u_0(x)\ \ \textup{on $\Omega$,}\label{mp4}
\end{eqnarray}
where $T>0$,  $\nu$ is the unit outward normal to $\partial\Omega$, and $\f$ is a vector-valued function from $\RN$
to $\RN$. The problem has been proposed by Ortiz et al \cite{ORS} as a continuum model for epitaxial thin film growth. In this context, $u$ is the scaled film height. The term $\Delta^2 u$ represents the capillarity-driven surface diffusion, while $\textup{div}\f(\gu)$ describes the upward hopping of atoms. The essential features of the growth of a thin film are depicted in \cite{ORS} (see also \cite{SK}). There are several stages. First, the mean deviation
of the film profile increases over time, known as roughing. After some initial period, islands form ``explosively'' in the substrate, where the number of islands decreases over time, whereas their size increases, a process called coarsening. Finally,
in the long run the film profile tends to exhibit a constant slope \cite{KOM}. 

The model here appears to be very successful in simulating the experimental observation \cite{KOM}. Analytical validation of the model has been carried out in \cite{FZ, KOM, SMG} and references therein under various assumptions on $\f$. Several function forms have been suggested for $\f$. For example, one can take 
\begin{equation*}
\f(\xi)=\left(c|\xi|^2+1\right)\xi.
\end{equation*} 
Here and in what follows the letter $c$ denotes a positive number whose value is either given or can be
theoretically derived from the given data. We refer the reader to \cite{KOM} for a discussion on how this type of functions can arise from applications. From the point of view of mathematical analysis, we see that functions of this kind have potentials, i.e., there are scalar functions $\phi$ such that 
\begin{equation}\label{cons}
\f=\nabla\phi.
\end{equation} Then \eqref{mp1} can be represented as the gradient
flow of the functional
\begin{equation}
E(u)=\io\left(\frac{1}{2}\left(\du\right)^2-\phi(\gu)\right)dx.
\end{equation}
That is,
\begin{equation}
\partial_tu=-\frac{\delta E}{\delta u}=-\textup{div}\left(\nabla\Delta u-\f(\nabla u)\right).
\end{equation}
This fact was employed in \cite{KOM, FZ} to analyze \eqref{mp1}-\eqref{mp4}. The case where $\f$ is not a conservative vector field
has been considered in \cite{SMG}. The method used there is to transform \eqref{mp1}-\eqref{mp4}
to an integral equation, and a solution is then established via successive approximations. The precise results from \cite{SMG} are stated in the following proposition.
\begin{prop}[A.N. Sandjo, S. Moutari, and Y. Gningue]\label{sand}Assume that  $\f$ is smooth with $\f(0)=0$ and $\nabla \f(0)=0$ and satisfies the condition
	\begin{equation*}
	|\nabla\f(\xi_1)-\nabla\f(\xi_2)|\leq c(|\xi_1|^{\alpha-2}+|\xi_2|^{\alpha-2})|\xi_1-\xi_2|\ \ \textup{for some $\alpha>2$ and $c>0$.}
	\end{equation*}
	If $\partial\Omega$ is $C^4$, $2<\alpha<3$, and $u_0\in L^p(\Omega)$, where $p=\frac{N(\alpha-1)}{3-\alpha}$, then the
	following statements hold true:
	\begin{enumerate}
		\item[\textup{(i)}]There is a $T>0$ such that \eqref{mp1}-\eqref{mp4} has a unique classical solution
		on $\ot$. Furthermore, for each $\delta\in(0,1)$ there holds
		\begin{equation}
		\sup_{0\leq t\leq T}\left\{t^{\frac{1-\delta}{2(\alpha-1)}}\|\nabla u\|_{\frac{N(\alpha-1)}{2\delta}}+t^{\frac{1}{2}}\|\nabla^2u\|_p\right\}<\infty,
		\end{equation}
		where $\|\cdot\|_p$ denotes the norm in $L^p(\Omega)$;
		\item[\textup{(ii)}] If $\|u_0\|_p$ is sufficiently small, then $u$ exists for $T=\infty$. 	
	\end{enumerate}
\end{prop}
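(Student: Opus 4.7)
The strategy is to recast \eqref{mp1}--\eqref{mp4} as a fixed-point problem via Duhamel's formula. Writing \eqref{mp1} as $\partial_tu+\ds u=\textup{div}\,\f(\gu)$ and letting $S(t)$ denote the analytic semigroup generated by $-\ds$ on $\Omega$ subject to the linear boundary conditions \eqref{mp2}--\eqref{mp3}, a solution is sought as a fixed point of
\begin{equation*}
\Phi(u)(t):=S(t)u_0+\int_0^tS(t-s)\,\textup{div}\,\f(\gu(s))\,ds.
\end{equation*}
The hypotheses $\f(0)=\nabla\f(0)=0$ together with the assumed modulus of continuity of $\nabla\f$ integrate to the pointwise bounds $|\f(\xi)|\le c|\xi|^\alpha$ and $|\f(\xi_1)-\f(\xi_2)|\le c(|\xi_1|+|\xi_2|)^{\alpha-1}|\xi_1-\xi_2|$, which provide the locally Lipschitz control of the nonlinearity needed in the contraction argument.

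The key analytic input is the family of smoothing estimates
\begin{equation*}
\|\nabla^kS(t)v\|_{q_2}\le c\,t^{-k/4-\frac{N}{4}(1/q_1-1/q_2)}\|v\|_{q_1},\qquad 1\le q_1\le q_2\le\infty,\ 0\le k\le 3,
\end{equation*}
which follow from the $C^4$-regularity of $\po$ via standard resolvent theory for fourth-order elliptic boundary value problems. The exponent $p=\frac{N(\alpha-1)}{3-\alpha}$ and the time-weights $t^{1/2}$, $t^{(1-\delta)/(2(\alpha-1))}$ are forced by the scaling $u(x,t)\mapsto\lambda^{(3-\alpha)/(\alpha-1)}u(\lambda x,\lambda^4t)$, under which the model equation $\partial_tu+\ds u=\textup{div}(|\gu|^{\alpha-1}\gu)$ is invariant and under which exactly these weights and the $L^p$-norm of $u_0$ are preserved. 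Accordingly one introduces the Banach space
\begin{equation*}
\|u\|_{X_T}:=\sup_{0<t\le T}\Bigl(t^{\frac{1-\delta}{2(\alpha-1)}}\|\gu(t)\|_{N(\alpha-1)/(2\delta)}+t^{1/2}\|\nabla^2u(t)\|_p\Bigr),
\end{equation*}
and the smoothing estimates yield immediately $\|S(\cdot)u_0\|_{X_T}\le c\|u_0\|_p$ with a constant independent of $T$.

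For the Duhamel term one applies the smoothing bound to $S(t-s)\,\textup{div}(\cdot)$, uses $\|\f(\gu)\|_{q_1}\le c\|\gu\|_{q_1\alpha}^\alpha$ with $q_1=N(\alpha-1)/(2\delta\alpha)$, and computes the resulting Beta integral $\int_0^t(t-s)^{-a}s^{-b}\,ds$; a direct exponent check (with $\delta\in(0,1)$ the free parameter and $2<\alpha<3$ the essential restriction) closes the scaling and produces
\begin{equation*}
\|\Phi(u)\|_{X_T}\le c\|u_0\|_p+c\|u\|_{X_T}^\alpha,\qquad \|\Phi(u)-\Phi(v)\|_{X_T}\le c(\|u\|_{X_T}+\|v\|_{X_T})^{\alpha-1}\|u-v\|_{X_T}.
\end{equation*}
Since $\|S(\cdot)u_0\|_{X_T}\to 0$ as $T\to 0^+$ for $u_0\in L^p$, a Banach contraction on a small ball of $X_T$ produces the local solution in (i); classical regularity is then recovered by bootstrapping in the linear biharmonic heat equation using the smoothness of $\f$. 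The above estimates are $T$-independent, so if $\|u_0\|_p$ is sufficiently small then $\Phi$ is already a contraction on a small ball of $X_\infty$, giving (ii). The principal obstacle is the exponent bookkeeping: the Beta integral must converge (forcing $a,b<1$) and simultaneously reproduce exactly the time weight sitting on the left of the $X_T$-norm; both conditions are strictly satisfied only when $2<\alpha<3$, and at $\alpha=3$ the exponent $p$ blows up and the scaling becomes critical in a borderline manner.
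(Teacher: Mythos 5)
The paper does not prove this proposition: it is quoted verbatim (with attribution) from Sandjo, Moutari, and Gningue \cite{SMG}, and the surrounding text merely summarizes that reference's method as ``to transform \eqref{mp1}-\eqref{mp4} to an integral equation, and a solution is then established via successive approximations.'' Your proposal --- Duhamel's formula for the biharmonic semigroup, $L^{q_1}$--$L^{q_2}$ smoothing estimates with the time weights dictated by the scaling $u\mapsto\lambda^{(3-\alpha)/(\alpha-1)}u(\lambda x,\lambda^4 t)$, and a contraction in a weighted Kato-type space $X_T$ --- is precisely that strategy, so it matches the approach the paper attributes to the original source. Since there is nothing to compare against within this paper itself, the only caution worth recording is that your sketch leaves the exponent bookkeeping (the Beta-integral convergence and the matching of time weights) unverified, and you should also confirm that the boundary conditions \eqref{mp2}--\eqref{mp3} genuinely yield an analytic semigroup with the stated $L^{q_1}$--$L^{q_2}$ bounds on a $C^4$ domain before regarding the proposal as complete.
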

Note that whenever $\f(\xi)=\varphi(\xi)\xi$, where $\varphi(\xi)$ is
a scalar function, the boundary condition \eqref{mp3} is equivalent to
\begin{equation}
\nabla\du\cdot\nu=0\ \ \textup{on $\Sigma_T$.}
\end{equation}

The objective of this paper is to present a new analytical perspective of
the model. Indeed, our assumptions are much weaker than those in the aforementioned papers. Our approach to \eqref{mp1}-\eqref{mp4} is based on  a new interpolation inequality and a Gronwall type argument.
It is worth poiting out
that our method allows for possible extension to the case where
the high order term in \eqref{mp1} is nonlinear \cite{LX2}.
We will pursue this possibility in a future study.

Precisely, we only assume:
\begin{enumerate}
	\item[(H1)] $\f$ is continuous and satisfies the growth condition
	\begin{equation}
	|\f(\xi)|\leq c|\xi|^\alpha+c\ \ \textup{ for some $c,\ \alpha \in (0,\infty)$};
	\end{equation}
	\item[(H2)] $\Omega$ is a bounded domain in $\RN$ with $C^{2,\gamma}$ boundary for some $\gamma\in (0,1)$;
	\item[(H3)] 
	$u_0\in W^{1,2}(\Omega)$.
\end{enumerate}


Since $\f$ is only continuous,  we are forced to seek a weak solution.
\begin{defn}
	We say that a function $u$ is a weak solution of \eqref{mp1}-\eqref{mp4} if:
	\begin{enumerate}
		\item[\textup{(D1)}] $u\in C([0,T]; L^2(\Omega))\cap L^2(0,T; W^{2,2}(\Omega)), \du\in L^2(0,T; W^{1,2}(\Omega)),
		|\f(\gu)|\in L^2(\ot)$;
		\item[\textup{(D2)}] $\gu\cdot\nu=0$ on $\Sigma_T$ and $u(x,0)=u_0(x)$ in $C([0,T]; L^2(\Omega))$;
		\item[\textup{(D3)}] for each smooth test function $\eta$ we have
		\begin{eqnarray}
		\lefteqn{\io u(x,T)\eta(x,T)dx-\io u_0(x)\eta(x,0)dx}\nonumber\\
		&&	-\iot u\partial_t \eta dxdt-\iot\left(\nabla\du-\f(\gu)\right)\nabla\eta dxdt=0.
		\end{eqnarray}
	\end{enumerate}
\end{defn}
Note that our integrability assumptions in (D1) are sufficient to guarantee that each term in (D2) and (D3) makes
sense. We can now state our main results.
\begin{thm}\label{mthm}
	Let (H1)-(H3) be satisfied. The following statements hold true:
	\begin{enumerate}
		\item[\textup{(R1)}] If $\alpha\leq 1$ then for each $T>0$ there is a weak solution $u$ to \eqref{mp1}-\eqref{mp4}
		on $\ot$;
		\item[\textup{(R2)}] Assume either
		\begin{equation}\label{con1}
		1<\alpha\leq \frac{N^2+2N+4}{N^2} \ \ \textup{and $N>2$ }
		\end{equation} 
		or \begin{equation}\label{con2}
		1<\alpha<3 \ \ \textup{and $N=2$. }
		\end{equation} Then there is a positive number $T_0$ determined
		by the given data such that \eqref{mp1}-\eqref{mp4} has a weak solution on $\Omega_{T_0}$.
	\end{enumerate}
\end{thm}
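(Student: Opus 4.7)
The plan is to construct approximate solutions, derive a priori energy estimates, control the nonlinear transport term by the new interpolation inequality highlighted in the abstract, and then pass to the limit by compactness. Since $\f$ is only continuous, one natural route is to regularize it to a Lipschitz $\f_n$ (for instance by mollification and truncation), to regularize $u_0$, and to solve the resulting smooth problem on $\ot$ by a Galerkin scheme in the eigenbasis of the biharmonic operator equipped with the boundary conditions \eqref{mp2}--\eqref{mp3}. This yields smooth approximate solutions $u_n$ defined on any prescribed time interval.

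For uniform estimates I would test the equation against $u_n$. Because \eqref{mp2}--\eqref{mp3} together eliminate every boundary contribution after two integrations by parts, one arrives at the basic identity
\[
\tfrac{1}{2}\tfrac{d}{dt}\|u_n\|_2^2+\|\Delta u_n\|_2^2=-\io\f_n(\nabla u_n)\cdot\nabla u_n\,dx,
\]
and (H1) bounds the right side by $c\io|\nabla u_n|^{\alpha+1}\,dx+c\io|\nabla u_n|\,dx$. The linear term is absorbed into $\|\Delta u_n\|_2^2$ via Cauchy--Schwarz, Young, and $\|\nabla u_n\|_2^2\leq\|u_n\|_2\|\Delta u_n\|_2$ (a consequence of $\nabla u_n\cdot\nu=0$ on $\po$).

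The critical step is to control $\io|\nabla u_n|^{\alpha+1}\,dx$. For part (R1), when $\alpha\leq 1$, we have $|\nabla u_n|^{\alpha+1}\leq|\nabla u_n|^2+1$, the estimate closes by Gronwall, and uniform bounds on $\|u_n\|_{L^\infty(0,T;L^2)}^2+\|\Delta u_n\|_{L^2(\ot)}^2$ hold for every $T>0$. For part (R2), the new interpolation inequality, combined with appropriate H\"older and Young estimates in both space and time, is expected to furnish an inequality of the form
\[
\iot|\nabla u_n|^{\alpha+1}\,dxdt\leq CT^{\beta}\,\Psi_n(T)^{\gamma},\qquad \Psi_n(T):=\sup_{[0,T]}\|u_n\|_2^2+\int_0^T\|\Delta u_n\|_2^2\,dt,
\]
with $\gamma>1$ and $\beta>0$ precisely in the range \eqref{con1}--\eqref{con2}. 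A standard continuity/bootstrap argument then yields a $T_0>0$, determined only by $\|u_0\|_2$ and the constants in (H1), on which $\Psi_n(T_0)\leq 2\|u_0\|_2^2$ uniformly in $n$.

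Passing to the limit $n\to\infty$ uses these uniform bounds together with a bound on $\partial_tu_n$ in a negative-order Sobolev space extracted from the equation itself. Aubin--Lions compactness then yields a subsequence along which $\nabla u_n\to\nabla u$ strongly in $L^q(\ot)$ for some $q>\alpha$; combined with the continuity of $\f$ and the growth bound in (H1), this gives $\f_n(\nabla u_n)\to\f(\nabla u)$ in $L^1(\ot)$, which suffices to pass to the limit in (D3). The two points I expect to be the hardest are: (i) proving the interpolation inequality with the exponent $\frac{N^2+2N+4}{N^2}$, since a naive application of Gagliardo--Nirenberg to $L^\infty L^2\cap L^2W^{2,2}$ only produces the strictly smaller threshold $\frac{N+6}{N+2}$, so a genuinely new argument exploiting the fourth-order structure and the boundary conditions \eqref{mp2}--\eqref{mp3} appears to be needed; and (ii) extracting strong enough convergence of $\nabla u_n$ to identify the nonlinear limit under the minimal continuity assumption on $\f$.
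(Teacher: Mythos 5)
You correctly anticipate that a naive Gagliardo--Nirenberg estimate based on the bounds coming from testing the equation against $u_n$ alone (i.e.\ $u_n\in L^\infty(0,T;L^2)\cap L^2(0,T;W^{2,2})$) caps the admissible exponent at $\frac{N+6}{N+2}$, which falls short of $\frac{N^2+2N+4}{N^2}$. But the ``genuinely new argument'' you ask for is not a sharper interpolation inequality applied to the \emph{same} energy quantities: the missing step is a \emph{second multiplier}. The paper also tests \eqref{mp1} against $\Delta u$, which (using \eqref{mp2}--\eqref{mp3}) produces
\[
\tfrac{1}{2}\tfrac{d}{dt}\io|\nabla u|^2dx+\io|\nabla\Delta u|^2dx=\io\f(\nabla u)\cdot\nabla\Delta u\,dx\le \tfrac12\io|\nabla\Delta u|^2dx+c\io|\nabla u|^{2\alpha}dx+c,
\]
and the energy that is closed by Gronwall is $y(t)=\io(u^2+|\nabla u|^2)\,dx$, with dissipation $\io(\Delta u)^2+\io|\nabla\Delta u|^2$. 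This is one derivative stronger than what you carry. The interpolation inequality of Proposition~\ref{interp} is then an iterated integration-by-parts estimate of the form
\[
\io|\nabla u|^{2\alpha}dx\le C^{b_k}\|\Delta u\|_{2^*}^{b_k}\|\nabla u\|_2^{b_k}\Bigl(\io|\nabla u|^{a_k}dx\Bigr)^{(2/N)^k},
\]
and it is precisely the appearance of $\|\Delta u\|_{2^*}$ (controlled by $\|\nabla\Delta u\|_2+\|\Delta u\|_2$ via Calder\'on--Zygmund) and of $\|\nabla u\|_2$ (controlled in $L^\infty_t$) that lets one push to $a_2\le2$, $b_2<2$ and hence to $\alpha\le\frac{N^2+2N+4}{N^2}$. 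Also note the target integrand is $|\nabla u|^{2\alpha}$, not $|\nabla u|^{\alpha+1}$: the harder of the two nonlinear terms comes from the $\Delta u$ multiplier, not the $u$ multiplier. So the plan as written cannot reach the stated threshold because the a priori bounds it invokes are not the ones the proof actually uses.

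A secondary, non-fatal difference concerns the approximation scheme. You regularize $\f$ and use Galerkin; the paper instead discretizes in time implicitly and rewrites the fourth-order equation as a coupled pair of second-order elliptic problems $-\Delta\psi+\tau\psi-\textup{div}\,\f(\nabla u)=-(u-v)/\tau$, $-\Delta u+\tau u=\psi$, solved by Leray--Schauder. The advantage (which the paper stresses) is that each iterate lies in $W^{2,p}(\Omega)$ for every $p$, so the multiplications by $u_k$ and by $\psi_k\approx-\Delta u_k$ are fully justified at the discrete level. With a Galerkin basis for the biharmonic operator under \eqref{mp2}--\eqref{mp3} one would have to be more careful that $\Delta u_n$ is an admissible test function; it is doable but not automatic, and in any case the estimate you would need to justify is the one involving the $\Delta u$ multiplier, which your outline omits. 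The compactness and limit-passage strategy you describe (Aubin--Lions/Simon plus a.e.\ convergence of $\nabla u_n$ and continuity of $\f$) matches the paper and is fine.
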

The proof of this theorem will be based on a new interpolation inequality, which we will introduce in Section \ref{sec3}. As we shall see,
the linear structure of the high order term in \eqref{mp1} is not essential in our proof , while the approach adopted in \cite{SMG} completely depends on this. 
We believe that our method can be extended to more general cases.

Next, we consider the Cauchy problem
\begin{eqnarray}
\partial_tu+\dus&=&\textup{div}\f(\nabla u)\ \ \textup{in $\RN\times(0,T)$,}\label{gp1}	\\
u(x,0)&=&u_0(x)\ \ \textup{on $\RN$.}\label{gp2}
\end{eqnarray}

A weak solution to \eqref{gp1}- \eqref{gp2} is defined as follow:
\begin{defn}\label{def2}
	We say that a function $u$ is a weak solution of \eqref{gp1}-\eqref{gp2} if:
	\begin{enumerate}
		\item[\textup{(D4)}] $u\in C([0,T]; L^2_{\textup{loc}}(\RN))\cap L^2(0,T; W^{2,2}_{\textup{loc}}(\RN)), \du\in L^2(0,T; W^{1,2}_{\textup{loc}}(\RN)),
		|\f(\gu)|\in L^2((0,T); L^2_{\textup{loc}}(\RN))$;
		\item[\textup{(D5)}] 
		$u(x,0)=u_0(x)$ in $C([0,T]; L^2_{\textup{loc}}(\RN))$;
		\item[\textup{(D6)}] for each smooth test function $\eta$ with compact support in the space variables we have
		\begin{eqnarray}
		\lefteqn{\irn u(x,T)\eta(x,T)dx-\irn u_0(x)\eta(x,0)dx}\nonumber\\
		&&	-\irnt u\partial_t \eta dxdt-\irnt\left(\nabla\du-\f(\gu)\right)\nabla\eta dxdt=0.
		\end{eqnarray}
	\end{enumerate}
\end{defn}
Our results are summarized in the following theorem.
\begin{thm}\label{mthm2}
	Assume that $u_0\in C^1(\RN)$
	and $\f$ is continuous. The following statements hold true:
	\begin{enumerate}
		\item[\textup{(R3)}] If $\nabla u_0$ is uniformly continuous and bounded on $\RN$, there is a positive number $T$ determined by the given data
		such that \eqref{gp1}-\eqref{gp2} has a weak solution $u$ on $\RN\times(0,T)$;
		\item[\textup{(R4)}] If $\f$ is locally Lipschitz and satisfies the growth condition
		\begin{equation}
		|\f(\xi)|\leq c|\xi|^\alpha\  \ \textup{for some $c>0$ and $3>\alpha>2$,}
		\end{equation}
		$\|\nabla u_0\|_\infty<\infty$, where $\|\cdot\|_p$ denotes the norm in $L^p(\RN)$,	and $\|u_0\|_{\frac{(\alpha-1)N}{3-\alpha}}$
		is sufficiently small, then  there is a  weak solution $u$ to 
		\eqref{gp1}-\eqref{gp2} for  $T=\infty$. Furthermore, there holds
		\begin{equation}
		\sup_{\RN\times [0,\infty)}|t^{\frac{1}{2(\alpha-1)}}\nabla u(x,t)|<\infty.
		\end{equation}
		Of course, in this case Definition \ref{def2} will have to be modified in an obvious way.
	\end{enumerate}
\end{thm}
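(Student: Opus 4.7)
The plan is to reformulate \eqref{gp1}-\eqref{gp2} as the mild equation
\begin{equation*}
u(x,t) = B(t) * u_0(x) - \int_0^t \nabla B(t-s) * \f(\nabla u(\cdot, s))(x)\, ds,
\end{equation*}
where $B$ is the biharmonic heat kernel, and to solve each part by a fixed point in an appropriate function space. The self-similar form $B(x,t) = t^{-N/4} F(x/t^{1/4})$ with $F$ Schwartz yields the standard kernel estimates $\|\nabla^k B(t)*\varphi\|_q \leq C t^{-(N(1/p-1/q)+k)/4}\|\varphi\|_p$, which are the only quantitative ingredient throughout.

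For (R3), since $\f$ is only continuous I would employ a Schauder rather than a Banach fixed point in
\[
X_T = \{v \in C(\RN\times[0,T]) : \nabla v \in C_b(\RN\times[0,T])\},\qquad \|v\|_{X_T} = \sup_{0 \le t \le T}\|\nabla v(t)\|_\infty.
\]
Differentiating the mild formula and using $\|B(t)\|_1 \leq C$ together with $\|\nabla^2 B(t)\|_1 \leq Ct^{-1/2}$ gives
\[
\|\nabla u(t)\|_\infty \leq C\|\nabla u_0\|_\infty + C\bigl(\sup_{|\xi|\le M}|\f(\xi)|\bigr) T^{1/2},
\]
so taking $M = 2C\|\nabla u_0\|_\infty + 1$ and $T$ sufficiently small lets the induced map $\Phi$ stabilise the $M$-ball in $X_T$. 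For compactness I would transfer the uniform continuity of $\nabla u_0$ to $B(t)*\nabla u_0$ (preserved by convolution) and extract a uniform modulus of continuity for the Duhamel term from $\|\nabla^3 B(t)\|_1 \leq Ct^{-3/4}$, giving equicontinuity on compact subsets of $\RN\times[0,T]$. A fixed point $u$ then verifies (D4)-(D6) directly from its representation.

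For (R4), the weight is dictated by the scaling $u(x,t)\mapsto\lambda^{(3-\alpha)/(\alpha-1)}u(\lambda x,\lambda^4 t)$, under which the equation, the norm $\|u_0\|_p$ with $p=(\alpha-1)N/(3-\alpha)$, and the combination $t^{1/(2(\alpha-1))}\|\nabla u(t)\|_\infty$ are all invariant. I would therefore work in
\[
Y = \Bigl\{v : \|v\|_Y \equiv \sup_{t>0} t^{1/(2(\alpha-1))}\|\nabla v(t)\|_\infty \le A\Bigr\},
\]
and apply the kernel estimates to obtain
\[
\|\nabla u(t)\|_\infty \leq C t^{-(N/p+1)/4}\|u_0\|_p + C\int_0^t (t-s)^{-1/2}\|\nabla u(s)\|_\infty^\alpha\, ds.
\]
Two small calculations seal everything: one checks $(N/p+1)/4 = 1/(2(\alpha-1))$, which is precisely why that $L^p$-norm appears in the smallness hypothesis; and substituting the $Y$-bound turns the Duhamel piece into a Beta integral $\int_0^t (t-s)^{-1/2} s^{-\alpha/(2(\alpha-1))}\,ds$ that is convergent exactly when $\alpha>2$ and evaluates to a constant multiple of $t^{-1/(2(\alpha-1))}$. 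Thus $\|\Phi(u)\|_Y \leq c_1\|u_0\|_p + c_2 A^\alpha$, so picking $A=2c_1\|u_0\|_p$ and $\|u_0\|_p$ small makes $\Phi$ preserve $Y$. The local Lipschitz hypothesis on $\f$ combined with the same Beta-integral bookkeeping produces a contraction, and the resulting global mild solution carries the stated decay.

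The main obstacle is (R3): with $\f$ only continuous, Picard iteration is unavailable, so I must produce a genuinely compact set stable under $\Phi$. The delicate step is an $\varepsilon$-free modulus-of-continuity estimate for $\nabla\Phi(u)$ uniform over the stabilised ball; the uniform continuity of $\nabla u_0$ supplies it for the linear part, while the kernel bound on $\nabla^3 B$ supplies it for the Duhamel part, after which Arzel\`a--Ascoli on compact subsets closes the argument. An alternative route -- regularise $\f$ to a Lipschitz $\f_\varepsilon$, solve by contraction, and pass to the limit -- requires essentially the same equicontinuity input.
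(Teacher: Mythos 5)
Your (R4) argument follows the paper's route in its essentials---Picard iteration, the scaling‑invariant weighted norm $\sup_t t^{1/(2(\alpha-1))}\|\nabla u(t)\|_\infty$, the Beta integral $\int_0^t (t-s)^{-1/2}s^{-\alpha/(2(\alpha-1))}\,ds$, and the identification $p=(\alpha-1)N/(3-\alpha)$ making $\|u_0\|_p$ the natural smallness quantity---but you have not actually used the hypothesis $\|\nabla u_0\|_\infty<\infty$, and the contraction step does not close without it. Membership in your ball $Y$ only controls $\|\nabla u(t)\|_\infty\le A t^{-1/(2(\alpha-1))}$, which blows up as $t\to 0$, so ``$\f$ locally Lipschitz'' does not supply a single Lipschitz constant valid on the range of the iterates. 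The paper resolves this by proving, in parallel with the weighted bound $b_k\le c$, an unweighted bound $a_k(\tau)=\max_{\RN\times[0,\tau]}|\nabla w_k|\le c$ for small $\tau$ (this is where $\|\nabla u_0\|_\infty$ enters, via $\|\nabla v_0(\cdot,t)\|_\infty\le c\|\nabla u_0\|_\infty$ and Lemma~\ref{small}), and then combines the two to get $\max_{\RN\times[0,\infty)}|\nabla w_k|\le c$; only after that does it invoke local Lipschitzness to run the geometric‑decay estimate for $d_k=\max|\nabla w_k-\nabla w_{k-1}|$ on a short interval and propagate by translation in $t$. You need to add this unweighted gradient bound before the contraction/Cauchy step.

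For (R3) you take a genuinely different route. You propose Schauder in a ball of $C_b$--gradient functions, with equicontinuity extracted from $\|\nabla^3 B(t)\|_1\lesssim t^{-3/4}$ and Arzel\`a--Ascoli on compacts. This can be made to work, but it is more delicate than the paper's argument: because the spatial domain $\RN\times[0,T]$ is noncompact, local compactness of the image does not automatically give compactness in $X_T$ with the sup norm, so you would need to either restrict to a space enforcing decay at infinity or verify tightness separately, and you would still need to verify that $\Phi$ is continuous with respect to the $X_T$ norm given only continuity of $\f$. The paper instead truncates: it replaces $\f$ by $\h=\f(\nabla u_0)+\theta(\f(\nabla u)-\f(\nabla u_0))$ componentwise, where $\theta$ is a fixed $C_0^\infty$ cutoff with $\theta(s)=s$ near $0$. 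Then $\h$ is \emph{globally bounded}, the modified problem has a solution by standard theory (after mollifying $\f$ and $u_0$), the linear part satisfies $\nabla v_0\to\nabla u_0$ uniformly (Claim~\ref{clm1}, using the uniform continuity and boundedness of $\nabla u_0$), the Duhamel part satisfies $|\nabla v_1|\le ct^{1/2}$, and hence on a short time interval $|\f(\nabla u)-\f(\nabla u_0)|\le 1$, which by construction makes $\h(\nabla u)=\f(\nabla u)$. No fixed point in the nonlinear problem and no compactness argument in a noncompact domain are needed. That truncation device is the key idea your proposal is missing; your alternative (regularize $\f$ and pass to the limit) is also what the paper does for smoothness of $\f$, but it does not by itself replace the truncation.

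\end{document}
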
	
Note that if $u_0\in C^1(\RN)$ with $\lim_{|x|\rightarrow\infty}|\nabla u_0(x)|=0 $ then $\nabla u_0$ is uniformly continuous on $\RN$. This theorem says that local existence does not depend on the growth condition on $\f$. We suspect that the same is true in the situation considered in \cite{SMG}. That is, (i) in Proposition \ref{sand} can be improved if $u_0$ is smooth. We shall leave this to the interested reader. As for (ii) in the proposition, the difference is that
our assumptions on $\f$ are much weaker. This is partly due to the fact that one has classical solutions in (ii).
Nonetheless, Theorem \ref{mthm2} here is largely inspired by the results in \cite{SMG}.
We refer the reader to \cite{LW, TK}
for related work on the finite time blow-up of solutions to fourth-order nonlinear parabolic equations. See \cite{CM, GP} and the reference therein for the case where the
lower order term $\textup{div}\f(\gu)$ in \eqref{mp1} is replaced by $|u|^p$ for some $p>1$.

The difficulty in analyzing a fourth-order parabolic partial differential equation is due to the failure of the maximum principle. The technique of a majorizing kernel developed in \cite{GP} does not seem to be applicable to our case.  Thus we have not been able to develop a theory similar to the critical Fujita exponent \cite{GP} for our problem.

The rest of the paper is organized as follows. In Section \ref{sec2}, we collect some preparatory lemmas. Section \ref{sec3} is devoted to the
derivation of a priori estimates for classical solutions of \eqref{mp1}-\eqref{mp4}. This critically depends on the interpolation inequality we have established. In Section \ref{sec4}, we design an approximation scheme for \eqref{mp1}-\eqref{mp4}. The idea is to suitably transform our fourth-order equation into a system of two second-order equations. The advantage of doing this is that approximate solutions generated are very smooth. As a result, calculations performed in Section \ref{sec3} remain valid. The proof of Theorem \ref{mthm} is given in Section \ref{sec5}. We form a sequence of approximate solutions. Enough a priori estimates can be gathered for the sequence to justify passing to the limit. In the very last section we present the proof of Theorem \ref{mthm2}. Here we exploit certain properties of the biharmonic heat kernel.

\section{Preliminaries.}\label{sec2}
In this section we collect some useful classical results. The next lemma has played an important role in our approach.
\begin{lem}\label{gron}
	Let $y(t)$ be a non-negative, differentiable function on $[0,\infty)$ satisfying the differential inequality
	\begin{equation}
	y^\prime(t)\leq c_1y^{1+\sigma}(t)+c_2\ \ \ \textup{on $[0, \infty)$ for some $\sigma, c_1, c_2\in (0,\infty)$.}\label{le11}
	\end{equation}
	Then we have
	\begin{equation}
	y(t)\leq \frac{1}{\left[\left(\left(v_0^{-\sigma}+\frac{c_1}{c_2}\right)e^{-\sigma c_2 t}-\frac{c_1}{c_2}\right)^+\right]^{\frac{1}{\sigma}}}-1,
	\end{equation}
	where $v_0=y(0)+1$.
\end{lem}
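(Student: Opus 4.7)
The plan is to reduce the differential inequality to a standard Bernoulli-type ODE by a translation that removes the awkward non-strict positivity of $y$ and lets the constant term $c_2$ be absorbed into a linear term.

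First, I would introduce the shift $v(t) = y(t) + 1$, so that $v \geq 1$, $v(0) = v_0$, and $v'(t) = y'(t)$. Since $0 \leq y \leq v$ and $1+\sigma > 0$, the hypothesis \eqref{le11} yields
\begin{equation*}
v'(t) \leq c_1 v^{1+\sigma}(t) + c_2.
\end{equation*}
The crucial observation is that $v \geq 1$ forces $c_2 \leq c_2 v$, so
\begin{equation*}
v'(t) \leq c_1 v^{1+\sigma}(t) + c_2 v(t).
\end{equation*}
This is the key inequality, because its right-hand side is a pure Bernoulli nonlinearity in $v$ (with no additive constant).

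Next, I would invoke the standard ODE comparison principle: let $u(t)$ solve the initial value problem $u' = c_1 u^{1+\sigma} + c_2 u$ with $u(0) = v_0$. Since the right-hand side is locally Lipschitz in $u$ and $v(0) \leq u(0)$, one obtains $v(t) \leq u(t)$ on the maximal interval where $u$ exists. To solve for $u$ explicitly, substitute $w = u^{-\sigma}$; then $w' = -\sigma u^{-\sigma-1} u' = -\sigma c_1 - \sigma c_2 w$, a linear first-order ODE. Using the integrating factor $e^{\sigma c_2 t}$ and the initial condition $w(0) = v_0^{-\sigma}$, a routine computation gives
\begin{equation*}
w(t) = \left(v_0^{-\sigma} + \tfrac{c_1}{c_2}\right) e^{-\sigma c_2 t} - \tfrac{c_1}{c_2}.
\end{equation*}
Inverting, $u(t) = w(t)^{-1/\sigma}$, which is defined precisely when $w(t) > 0$.

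Finally, $y = v - 1 \leq u - 1$ yields the claimed bound, with the positive part $(\cdot)^+$ accounting for the fact that the comparison solution $u$ blows up at the time when $w$ first vanishes; beyond that time the stated bound is vacuous (infinite), which is consistent with possible blow-up of $y$ itself. The only mildly delicate point, and the one I would be most careful with, is justifying the comparison step while $u$ is finite and checking that the elementary manipulation $c_2 \leq c_2 v$ is really what produces the exponential factor $e^{-\sigma c_2 t}$ in the stated formula rather than a linearly growing bound; the computation above confirms this.
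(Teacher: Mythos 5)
Your proof is correct and uses the same two key substitutions as the paper: the shift $v=y+1$ (which makes $v\ge 1$ and turns the additive constant $c_2$ into the linear term $c_2 v$), followed by the Bernoulli change of variable $v\mapsto v^{-\sigma}$ (resp.\ $u\mapsto u^{-\sigma}$) that linearizes the equation, and the integrating factor $e^{\sigma c_2 t}$. The only stylistic difference is that the paper manipulates the differential inequality for $v$ directly---subtracting $c_2 v$, dividing by $v^{1+\sigma}$, and integrating after multiplying by $e^{\sigma c_2 t}$---whereas you first pass to an explicit comparison solution $u$ of the exact ODE and invoke an ODE comparison theorem to conclude $v\le u$. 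Both routes give the same formula; the paper's direct inequality manipulation is slightly leaner because it avoids appealing to a separate comparison principle, but your argument is sound and the explicit reduction $w'=-\sigma c_1-\sigma c_2 w$ is exactly the computation underlying the paper's steps.
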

\begin{proof}
	Set \begin{equation}
	v=y+1.
	\end{equation}
	Then we can easily derive from \eqref{le11} that
	\begin{equation}
	v^\prime\leq c_1v^{1+\sigma}+c_2v.
	\end{equation}
	Subtract $c_2v$ from both sides of the above inequality and divide through the resulting inequality by $v^{1+\sigma}$ to obtain
	\begin{equation}
	\left(v^{-\sigma}\right)^\prime+\sigma c_2v^{-\sigma}\geq -c_1\sigma.
	\end{equation}
	Multiply through the inequality by $e^{\sigma c_2 t}$ and integrate to get
	\begin{equation}
	e^{\sigma c_2 t}v^{-\sigma}-v_0^{-\sigma}\geq -\frac{c_1}{c_2}\left(e^{\sigma c_2 t}-1\right),
	\end{equation}
	from whence the lemma follows.
\end{proof}

Our  global existence  assertion in the case of small initial data relies on the following lemma.
\begin{lem}\label{small}
	Let $\alpha,\lambda\in (0,\infty)$ be given and $\{b_k\}$ a sequence of nonnegative numbers with the property
	\begin{equation}
	b_k\leq b_0+\lambda b_{k-1}^{1+\alpha}\ \ \textup{for $k=1,2,\cdots.$}
	\end{equation}
	If $2\lambda(2b_0)^\alpha<1$, then
	\begin{equation}
	b_k\leq \frac{b_0}{1-\lambda(2b_0)^\alpha}\ \ \textup{for all $k\geq 0$.}
	\end{equation}
\end{lem}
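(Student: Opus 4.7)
The plan is to prove the bound by induction on $k$, with the candidate bound $M:=\frac{b_0}{1-\lambda(2b_0)^\alpha}$. The strict inequality $2\lambda(2b_0)^\alpha<1$ is stronger than simply requiring $\lambda(2b_0)^\alpha<1$, and its real purpose is to make $M$ strictly less than $2b_0$; this is the key numerical fact that fuels the induction.

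First I would dispose of the trivial case $b_0=0$: the recurrence then forces $b_k=0$ for all $k\geq 1$, and the asserted bound is $0\leq 0$. So assume $b_0>0$. Then $M$ is well-defined and positive. From $2\lambda(2b_0)^\alpha<1$ I would extract $\lambda(2b_0)^\alpha<\tfrac{1}{2}$, hence $1-\lambda(2b_0)^\alpha>\tfrac{1}{2}$, and therefore
\begin{equation*}
M=\frac{b_0}{1-\lambda(2b_0)^\alpha}<2b_0.
\end{equation*}
The base case $b_0\leq M$ is immediate since $1-\lambda(2b_0)^\alpha<1$.

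For the inductive step, suppose $b_{k-1}\leq M$. Using the hypothesized recurrence and $M<2b_0$,
\begin{equation*}
b_k\leq b_0+\lambda b_{k-1}^{1+\alpha}\leq b_0+\lambda M\cdot M^\alpha\leq b_0+\lambda M(2b_0)^\alpha.
\end{equation*}
The right-hand side equals $b_0+M\cdot\lambda(2b_0)^\alpha$; by the very definition of $M$ we have $b_0=M(1-\lambda(2b_0)^\alpha)$, so this simplifies exactly to $M$. This closes the induction.

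There is no real obstacle; the only point requiring care is noticing that the factor $2$ in the smallness hypothesis is precisely what is needed so that $M^\alpha$ can be controlled by $(2b_0)^\alpha$, after which the algebra collapses to an equality. I would finish by remarking that the same proof shows the conclusion continues to hold with $2b_0$ replaced by any fixed $R>b_0$ as long as $\lambda R^\alpha$ is small enough that $\frac{b_0}{1-\lambda R^\alpha}\leq R$; the choice $R=2b_0$ is the simplest sufficient one.
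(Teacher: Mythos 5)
Your proof is correct, and it is the induction argument the paper alludes to when it says the lemma ``can easily be established via induction'' (the paper gives no further detail). The key observation you isolate — that $2\lambda(2b_0)^\alpha<1$ forces $M=\frac{b_0}{1-\lambda(2b_0)^\alpha}<2b_0$, so that $M^\alpha\leq(2b_0)^\alpha$ closes the induction exactly — is precisely the point of the hypothesis.
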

This lemma can easily be established via induction.

We next recall some relevant information on the biharmonic heat kernel. Set
\begin{equation}\label{fn}
\fn(\eta)=\eta^{1-N}\int_{0}^{\infty}e^{-s^4}(\eta s)^{\frac{N}{2}}J_{\frac{N-2}{2}}(\eta s)ds.
\end{equation}
Here $J_\nu$ denotes the $\nu$th Bessel function of the first kind.
Then the biharmonic heat kernel $b_N(x,t)$ has the expression 
\begin{equation}
b_N(x,t)=\alpha_Nt^{-\frac{N}{4}}\fn\left(\frac{|x|}{t^{\frac{1}{4}}}\right),
\end{equation}
where $\alpha_N>0$ is a constant to be specified. 
The following lemma summarizes some relevant properties of $\fn$.
\begin{lem}
	The function $\fn$ defined in \eqref{fn} has the following properties:
	\begin{enumerate}
		\item[\textup{(F1)}] $\fn$ is a solution of the ordinary differential equation
		\begin{equation}
		\fn^{\prime\prime\prime}(\eta)+\frac{N-1}{\eta}	\fn^{\prime\prime}(\eta)-\frac{N-1}{\eta^2}	\fn^{\prime}(\eta)-\frac{\eta}{4}\fn(\eta)=0;
		\end{equation}
		\item[\textup{(F2)}] There exist positive numbers $K=K(N), \mu=\mu(N)$ with
		\begin{equation}\label{gp}
		|\fn(\eta)|\leq K\textup{exp}(-\mu\eta^{
			\frac{4}{3}})\ \ \textup{for $\eta\geq 0$;}
		\end{equation} 
		\item[\textup{(F3)}]$\fn^\prime(\eta)=-\eta f_{N+2}(\eta)$;
		\item[\textup{(F4)}] $\fn(\eta)$ changes signs infinitely many times as $\eta\rightarrow\infty$, and  there holds
		\begin{equation}
		\int_{0}^{\infty}\eta^{N-1-\beta}\fn(\eta)d\eta >0\ \ \textup{for each $\beta\in [0, N)$.}
		\end{equation}
	\end{enumerate}
\end{lem}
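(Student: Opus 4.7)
The plan is to derive (F1)--(F4) directly from the integral representation of $f_N$ together with the self-similar structure $b_N(x,t)=\alpha_N t^{-N/4}f_N(|x|/t^{1/4})$ of the biharmonic heat kernel. For (F1), I would use that $b_N$ satisfies $\partial_t b+\Delta^2 b=0$, which in radial form reads $\partial_t b+r^{1-N}(r^{N-1}(\Delta b)_r)_r=0$. Multiplying by $r^{N-1}$ and integrating from $0$ to $r$ gives $\int_0^r s^{N-1}\partial_t b(s,t)\,ds=-r^{N-1}(\Delta b)_r(r,t)$. After substituting the self-similar ansatz and changing variables $\eta=s/t^{1/4}$, the integrand on the left rearranges via the identity $-N\eta^{N-1}f_N-\eta^N f_N'=-(\eta^N f_N)'$ into a total derivative, so the integral collapses to a boundary value. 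Equating both sides and dividing out common powers of $r$ and $t$ yields $(Lf_N)'(\eta)=(\eta/4)f_N(\eta)$ with $L\phi=\phi''+(N-1)\eta^{-1}\phi'$, which expands to the ODE of (F1).

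For (F3), I would differentiate under the integral sign. Writing $f_N(\eta)=\eta^{1-N/2}g(\eta)$ with $g(\eta)=\int_0^\infty e^{-s^4}s^{N/2}J_{(N-2)/2}(\eta s)\,ds$ and using the Bessel identity $J_\nu'(z)=(\nu/z)J_\nu(z)-J_{\nu+1}(z)$ with $\nu=(N-2)/2$, the $(\nu/z)$-contribution in $g'$ cancels exactly against the derivative of the prefactor $\eta^{1-N/2}$. The remaining $-J_{N/2}$ piece rearranges into $-\eta\cdot\eta^{-N/2}\int_0^\infty e^{-s^4}s^{(N+2)/2}J_{N/2}(\eta s)\,ds=-\eta f_{N+2}(\eta)$.

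For (F2), I would run a steepest-descent analysis on the integral. Inserting the large-argument asymptotic $J_\nu(z)\sim\sqrt{2/(\pi z)}\cos(z-\nu\pi/2-\pi/4)$, the integrand reduces to two complex exponentials with phases $\pm i\eta s-s^4$ (times algebraic factors). The saddles sit at $s_*=(\eta/4)^{1/3}e^{\pm i\pi/6}$; the real part of the phase at the saddle is $-\tfrac{3}{2\cdot 4^{4/3}}\eta^{4/3}$, and contour deformation produces the decay $\exp(-\mu\eta^{4/3})$ uniformly in $\eta$ after combining with a trivial bound for small $\eta$. The same analysis yields a leading asymptotic $f_N(\eta)\sim C\eta^a\exp(-\mu\eta^{4/3})\cos(\beta\eta^{4/3}+\theta)$ as $\eta\to\infty$, whose cosine factor forces infinitely many sign changes, establishing the oscillation half of (F4).

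For the positivity of $\int_0^\infty\eta^{N-1-\beta}f_N(\eta)\,d\eta$ with $\beta\in[0,N)$, I would rewrite it as a positive multiple of $\int_{\mathbb{R}^N}|x|^{-\beta}b_N(x,1)\,dx$ and apply Plancherel. Since $\widehat{b_N}(\xi,1)=e^{-|\xi|^4}$ and the distributional Fourier transform of $|x|^{-\beta}$ on $\mathbb{R}^N$ is a positive multiple of the Riesz kernel $|\xi|^{\beta-N}$, the moment equals a positive multiple of the manifestly positive integral $\int_{\mathbb{R}^N}|\xi|^{\beta-N}e^{-|\xi|^4}\,d\xi$. The main obstacle is the saddle-point analysis behind (F2) and the oscillation part of (F4): one must simultaneously obtain uniform exponential-decay bounds and extract the oscillatory amplitude of the leading term, which requires careful contour deformation and uniform control of the Bessel error terms. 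The other steps reduce to direct algebraic manipulation with standard Bessel and Fourier identities.
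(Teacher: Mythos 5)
The paper does not actually prove this lemma: it simply cites \cite{FGG} for (F1), (F3), (F4) and \cite{GP} for (F2), so there is no proof in the text to compare against. Your sketch is a genuinely self-contained derivation and, as far as I can check, each step is sound. For (F1) the power-counting works out: with $\eta=s/t^{1/4}$ one gets $s^{N-1}\partial_t b = -\tfrac{\alpha_N}{4}t^{-5/4}(\eta^N f_N)'$, so the integral collapses to $-\tfrac{\alpha_N}{4}t^{-1}\eta^N f_N$, and matching with $-r^{N-1}(\Delta b)_r=-\alpha_N t^{-1}\eta^{N-1}(Lf_N)'$ yields $(Lf_N)'=\tfrac{\eta}{4}f_N$, which expands to the stated ODE. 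For (F3) the cancellation $(1-\tfrac{N}{2})+\tfrac{N-2}{2}=0$ is exactly right and the remaining term reorganizes to $-\eta f_{N+2}$. For (F2) the saddle location $s_*=(\eta/4)^{1/3}e^{\pm i\pi/6}$ gives $\operatorname{Re}\phi(s_*)=-\tfrac{3}{2\cdot 4^{4/3}}\eta^{4/3}$, which is the correct exponent (and the imaginary part $\sim\eta^{4/3}$ drives the sign changes in (F4)). Your positivity argument for (F4) is also fine: $b_N(\cdot,1)$ is Schwartz, $\widehat{b_N}(\cdot,1)=e^{-|\xi|^4}$, and for $0<\beta<N$ the distribution $|x|^{-\beta}$ has Fourier transform a positive multiple of $|\xi|^{\beta-N}$, so the pairing gives a positive multiple of $\int_{\mathbb{R}^N}|\xi|^{\beta-N}e^{-|\xi|^4}d\xi>0$; the endpoint $\beta=0$ is the normalization $\int b_N=1$. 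The only places needing care in a full write-up are the contour deformation in (F2) (uniform error control for the Bessel asymptotics across all $\eta$, plus the trivial small-$\eta$ bound) and the invocation of the Riesz transform formula as a tempered-distribution pairing rather than literal Plancherel, but those are standard. In short: correct, and more informative than the paper, which defers entirely to the references.
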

The items (F1), (F3), and (F4) can be found in \cite{FGG}, while (F2) is taken from \cite{GP}.

Denote by $B(0,r)$ the ball centered at the origin with radius $r$. We calculate
\begin{eqnarray}
\irn\fn\left(\frac{|y|}{t^{\frac{1}{4}}}\right)dy
&=&\int_{0}^{\infty}\int_{\partial B(0,r)}\fn\left(\frac{|y|}{t^{\frac{1}{4}}}\right)d\mathcal{H}^{N-1}dr\nonumber\\
&=&\omega_N\int_{0}^{\infty}\fn\left(\frac{r}{t^{\frac{1}{4}}}\right)r^{N-1}dr\nonumber\\
&=&\omega_Nt^{\frac{N}{4}}\int_{0}^{\infty}\fn(\eta)\eta^{N-1}d\eta,\label{stan}
\end{eqnarray}
where $\omega_N$ is the volume of the unit ball in $\RN$. Thus we can choose $\alpha_N$ so that 
\begin{equation}\label{norm}
\alpha_Nt^{-\frac{N}{4}}\irn\fn\left(\frac{|x|}{t^{\frac{1}{4}}}\right)dx=1.
\end{equation}

An easy consequence of (F2) and (F3) is that for each non-negative integer $n$ there are two positive numbers $K=K(n,N), \mu=\mu(n, N)$ such that
\begin{equation}
\left|\fn^{(n)}(\eta)\right|\leq K\exp(-\mu\eta^{\frac{4}{3}})\ \ \textup{on $[0,\infty)$,}
\end{equation}
where $\fn^{(n)}$ denotes the nth order derivative of $\fn$. Subsequently, for each $q>1$ we have
\begin{eqnarray}
\irn\left|\fn^{(n)}\left(\frac{|y|}{t^{\frac{1}{4}}}\right)\right|^qdy
&=&\int_{0}^{\infty}\int_{\partial B(0,r)}\left|\fn^{(n)}\left(\frac{|y|}{t^{\frac{1}{4}}}\right)\right|^qd\mathcal{H}^{N-1}dr\nonumber\\
&=&\omega_N\int_{0}^{\infty}\left|\fn^{(n)}\left(\frac{r}{t^{\frac{1}{4}}}\right)\right|^qr^{N-1}dr\nonumber\\
&=&\omega_Nt^{\frac{N}{4}}\int_{0}^{\infty}\left|\fn^{(n)}(\eta)\right|^q\eta^{(N-1)}d\eta\nonumber\\
&\leq &ct^{\frac{N}{4}}\int_{0}^{\infty}\textup{exp}(-\mu q\eta^{
	\frac{4}{3}})\eta^{(N-1)}d\eta\nonumber\\
&\leq &ct^{\frac{N}{4}}.\label{stan2}
\end{eqnarray}

Remember that the convolution of  two functions $f$ and $g$ on $\RN$ is defined to be
\begin{equation}
f*g(x)=\irn f(x-y)g(y)dy.
\end{equation}
Young's inequality for convolutions is as follows: Let $h=f*g$, then
\begin{equation}\label{youn}
\|h\|_q\leq \|f\|_p\|g\|_r,
\end{equation} 
where $1\leq p, q, r\leq \infty$ and $\frac{1}{q}=\frac{1}{p}+\frac{1}{r}-1$. When $r$ is the conjugate index to $p$ (namely $\frac{1}{p}+\frac{1}{r}=1$), then $q=\infty$. It is also frequently used with $r=1$. Then $p=q$ and \eqref{youn} becomes
\begin{equation}\label{youn1}
\|h\|_q\leq \|f\|_q\|g\|_1.
\end{equation} 

Relevant interpolation inequalities for Sobolev spaces are listed in the following lemma.
\begin{lem}	Let (H2) be satisfied.
	Then we have:
	\begin{enumerate}
		\item There is a positive number c such that
		\begin{equation}
		\|\gu\|_{2^*}\leq c\left(\|\nabla^2 u\|_2+\|u\|_2\right)\ \ \textup{for all $u\in
			W^{2,2}(\Omega)$},
		\end{equation}
		where $2^*=\frac{2N}{N-2}$ if $N>2$ and any number bigger than $2$ if $N=2$. Obviously, here $\|\cdot\|_p$ denotes the norm in the space $L^p(\Omega)$;
		\item Let $p\in [2, 2^*)$. Then for each $\varepsilon >0$ there is a positive number $c=c(\varepsilon, p)$ such that
		\begin{equation}
		\|\nabla u\|_p\leq \varepsilon\|\nabla^2 u\|_2+c\|u\|_2\ \ \textup{for all $u\in
			W^{2,2}(\Omega)$}.	\label{otn10}
		\end{equation}
	\end{enumerate}
\end{lem}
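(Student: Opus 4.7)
The plan is to derive both bounds from classical Sobolev embedding and a standard first-order interpolation, using the fact that (H2) gives $\partial\Omega \in C^{2,\gamma}$, hence there is a bounded extension operator $W^{2,2}(\Omega) \to W^{2,2}(\RN)$ which lets us appeal to the usual Gagliardo--Nirenberg--type results.

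For (1), I would first apply the Sobolev embedding $W^{1,2}(\Omega)\hookrightarrow L^{2^*}(\Omega)$ componentwise to $\gu$, obtaining
\begin{equation*}
\|\gu\|_{2^*}\leq c\left(\|\nabla^2 u\|_2+\|\gu\|_2\right).
\end{equation*}
The task then reduces to eliminating the intermediate term $\|\gu\|_2$ in favor of $\|\nabla^2 u\|_2+\|u\|_2$. For this I would use the elementary identity
\begin{equation*}
\io|\gu|^2 dx=-\io u\du\, dx+\int_{\po}u\,\gu\cdot\nu\, d\mathcal{H}^{N-1},
\end{equation*}
bound the boundary term by the trace inequality $\|u\|_{L^2(\po)}\leq c\|u\|_{W^{1,2}(\Omega)}$, and invoke Young's inequality to absorb the resulting $\|\gu\|_2$ on the left. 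The outcome is the standard first-order interpolation $\|\gu\|_2\leq \varepsilon\|\nabla^2 u\|_2+c(\varepsilon)\|u\|_2$, which when combined with the embedding above yields (1). The case $N=2$ is handled identically, using $W^{1,2}(\Omega)\hookrightarrow L^q(\Omega)$ for every $q<\infty$.

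For (2), I would interpolate between $L^2$ and $L^{2^*}$. Since $p\in[2,2^*)$, there is $\theta\in[0,1)$ with $\frac{1}{p}=\frac{\theta}{2^*}+\frac{1-\theta}{2}$, so by H\"older
\begin{equation*}
\|\gu\|_p\leq \|\gu\|_{2^*}^\theta\|\gu\|_2^{1-\theta}.
\end{equation*}
Since $\theta<1$ strictly, Young's inequality with an arbitrary parameter gives, for any $\delta>0$,
\begin{equation*}
\|\gu\|_p\leq \delta\|\gu\|_{2^*}+c(\delta)\|\gu\|_2.
\end{equation*}
Plugging in (1) to control $\|\gu\|_{2^*}$ and using the first-order interpolation for $\|\gu\|_2$ established in step one, the gradient term is fully absorbed, and choosing $\delta$ small compared to the prescribed $\varepsilon$ produces \eqref{otn10}.

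The argument is routine; the only spot that requires some care is the absorption of $\|\gu\|_2$ via the boundary integral on a merely $C^{2,\gamma}$ domain, but this is covered by the standard trace theorem and does not obstruct the conclusion. No new ingredient beyond classical Sobolev theory is needed, so I expect no essential obstacle.
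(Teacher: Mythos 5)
The paper offers no proof of this lemma; it is quoted as a classical fact under ``Relevant interpolation inequalities for Sobolev spaces are listed in the following lemma.'' So there is no paper argument to compare against, and the question is only whether your proposed derivation is sound.

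Your overall scheme (Sobolev embedding plus a first-order interpolation $\|\gu\|_2\leq\varepsilon\|\nabla^2u\|_2+c(\varepsilon)\|u\|_2$, then H\"older/Young for (2)) is the right skeleton, but the way you try to get the first-order interpolation has a genuine gap. After the integration by parts you need to control
\begin{equation*}
\int_{\po}u\,\gu\cdot\nu\,d\mathcal{H}^{N-1}\leq\|u\|_{L^2(\po)}\|\gu\|_{L^2(\po)},
\end{equation*}
and if you bound each factor by the ordinary trace inequality $\|v\|_{L^2(\po)}\leq c\,\|v\|_{W^{1,2}(\Omega)}$ with a fixed constant $c$, the product produces a term $c^2\|\gu\|_2^2$ on the right, where $c$ is whatever the trace theorem hands you and may well exceed $1$. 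Young's inequality cannot reduce the coefficient of an already-quadratic term; it only trades a product for a sum of squares. So ``absorb the resulting $\|\gu\|_2$ on the left'' does not go through as written.

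To repair this you need one of the following. (i) Use the sharpened trace inequality $\|v\|_{L^2(\po)}^2\leq\epsilon\|\nabla v\|_2^2+C(\epsilon)\|v\|_2^2$ (valid on a $C^{2,\gamma}$ domain via a vector field $\Psi$ with $\Psi\cdot\nu\geq1$ on $\po$), apply it to both $u$ and $\gu$, and choose the three small parameters in the correct order so that the total coefficient of $\|\gu\|_2^2$ stays below $1$ while the coefficient of $\|\nabla^2u\|_2^2$ can be driven to zero. This works but is delicate, and your write-up does not record it. (ii) Actually use the extension operator you mention at the outset: extend $u$ to $Eu\in W^{2,2}(\RN)$, apply the Gagliardo--Nirenberg inequality $\|\nabla v\|_2\leq c\,\|\nabla^2v\|_2^{1/2}\|v\|_2^{1/2}$ on $\RN$ (no boundary term), and then the Young step produces a coefficient $\tfrac12<1$ in front of $\|\gu\|_2^2$, so absorption is painless. (iii) Invoke Ehrling's lemma, using the compactness of $W^{2,2}(\Omega)\hookrightarrow W^{1,2}(\Omega)$, which yields the interpolation inequality with arbitrary $\varepsilon$ directly.

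Once the first-order interpolation is in hand, the remainder of your argument --- Sobolev embedding of $\gu$ componentwise for (1), and H\"older interpolation between $L^2$ and $L^{2^*}$ plus Young for (2) --- is correct.
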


\begin{lem}
	Let (H2) be satisfied and
	$u$ be
	a weak solution of the Neumann boundary
	value problem
	\begin{eqnarray}
	-\Delta u &=& g\ \ \textup{in $\Omega$,}\\
	\gu\cdot\nu&=&0 \ \ \textup{on $\po$.}
	\end{eqnarray}
	Then for each $p>1$ there is a positive constant $c$
	depending only on $N, p, \Omega$ and the smoothness of the boundary
	such that
	\begin{equation}\label{cz}
	\|u\|_{W^{2,p}(\Omega)}\equiv\|\nabla^2u\|_p+\|\gu\|_p+\|u\|_p\leq c \|g\|_p=c\|\du\|_p.
	\end{equation}
\end{lem}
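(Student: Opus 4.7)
The plan is to view this as a standard $L^p$ regularity result for the Neumann problem for the Laplacian, which falls within the Agmon--Douglis--Nirenberg framework for elliptic boundary value problems with coefficients and boundary data of appropriate smoothness. The key observation is that the Neumann boundary operator $\nabla u\cdot\nu$ together with $-\Delta$ satisfies the Lopatinskii--Shapiro complementing condition, and $\partial\Omega\in C^{2,\gamma}$ by (H2), so the full machinery of Calder\'on--Zygmund theory for boundary value problems applies.

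First I would localize with a finite partition of unity $\{\zeta_i\}$ subordinate to a cover of $\overline{\Omega}$ consisting of interior balls $B_i\Subset\Omega$ and boundary patches where $\partial\Omega$ can be flattened by a $C^{2,\gamma}$ diffeomorphism. Writing $u=\sum_i (\zeta_i u)$ and commuting $\Delta$ past $\zeta_i$, it suffices to estimate each piece.

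For an interior ball I would use the classical $L^p$ Calder\'on--Zygmund estimate for the Newton potential applied to the compactly supported right hand side $\Delta(\zeta_i u)=\zeta_i g-2\nabla\zeta_i\cdot\nabla u-u\Delta\zeta_i$, producing
\begin{equation*}
\|\nabla^2(\zeta_i u)\|_p\leq c\bigl(\|g\|_p+\|\nabla u\|_p+\|u\|_p\bigr).
\end{equation*}
For a boundary patch, after flattening, the problem reduces to $-\Delta v=\tilde g$ in a half ball with $\partial v/\partial x_N=0$ on the flat part. Here the even reflection of $v$ across $\{x_N=0\}$ yields a function on the full ball whose Laplacian, after reflection of $\tilde g$, is in $L^p$, so the interior Calder\'on--Zygmund estimate again delivers the second-derivative bound. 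Summing the local estimates and using the compactness of the cover yields
\begin{equation*}
\|\nabla^2u\|_p\leq c\bigl(\|g\|_p+\|\nabla u\|_p+\|u\|_p\bigr).
\end{equation*}
The lower-order terms $\|\nabla u\|_p+\|u\|_p$ on the right are then absorbed in the standard way: an interpolation inequality between $\|\nabla^2u\|_p$ and $\|u\|_p$ allows $\|\nabla u\|_p$ to be absorbed into $\|\nabla^2u\|_p$ with small constant plus $c\|u\|_p$, after which a contradiction/compactness argument (or, equivalently, elimination of the kernel of the Neumann Laplacian using the compatibility $\int_\Omega g=0$) eliminates the remaining $\|u\|_p$.

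The main obstacle, and the only nontrivial point, is precisely the last step: handling the one-dimensional kernel of the Neumann Laplacian (the constants) so that the right hand side reduces to $\|g\|_p$ alone. I would dispose of it by the standard Lions--Peetre compactness argument: if no such inequality held, one would produce a sequence $u_n$ with $\|u_n\|_{W^{2,p}}=1$ and $\|\Delta u_n\|_p\to 0$; by Rellich compactness a subsequence converges in $W^{1,p}$ to a nonzero function $u_\infty$ with $\Delta u_\infty=0$ and $\nabla u_\infty\cdot\nu=0$, hence constant, which forces $\|u_\infty\|_{W^{2,p}}$ to reside entirely in the $L^p$-norm while the implicit normalization (picking the representative with vanishing mean, which is legitimate since the estimate is invariant under addition of constants on both sides once $\|u\|_p$ is reinterpreted accordingly) rules this out. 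This is classical, so in the write-up I would simply cite the Agmon--Douglis--Nirenberg paper and a standard PDE text for the precise statement.
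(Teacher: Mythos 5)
Your proof outline is the standard one (ADN-style localization, flattening of the boundary, even reflection to handle the Neumann condition, interior Calder\'on--Zygmund estimates on each piece, then absorption of the lower-order terms via interpolation and a compactness argument), and it is essentially correct; but note that the paper itself does not prove this lemma at all --- it simply labels it ``the classical Calder\'on--Zygmund inequality'' and cites Grisvard \cite{G}. Since you conclude by saying you would cite Agmon--Douglis--Nirenberg and a standard text for the precise statement, your handling of the result is in effect the same as the paper's; the additional sketch you supply is just what such a reference contains. Two small remarks. First, you correctly flag the one genuine subtlety in the lemma as stated: because $u$ is determined only up to an additive constant, the inequality $\|u\|_{W^{2,p}}\le c\|\Delta u\|_p$ cannot hold literally without a normalization (e.g.\ zero mean); the paper glosses over this, and in every place the lemma is actually invoked only the translation-invariant part $\|\nabla^2 u\|_p\le c\|\Delta u\|_p$ is used (or $\|u\|_p$, $\|\nabla u\|_p$ are already bounded from the equation $-\Delta u+\tau u=\psi$), so the abuse is harmless, exactly as you observe. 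Second, a minor imprecision in your sketch: after flattening a boundary patch the Laplacian becomes a variable-coefficient second-order operator, not $-\Delta$ itself, so the even reflection must be combined with the usual freezing-of-coefficients perturbation argument; this is standard and does not affect the conclusion, but the reduction to ``$-\Delta v=\tilde g$ in a half ball'' is not literal.
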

This lemma is the classical Calder\'{o}n-Zygmund inequality. It can be found in \cite{G}.


\section{A priori estimates.}\label{sec3}
Our a priori estimates rely on an interpolation inequality, which may be of interest in its own right. To state the inequality, we 
set
\begin{equation*}
u_\Omega=\frac{1}{|\Omega|}\io u dx,
\end{equation*}
where $|\Omega|$ denotes the Lebesgue measure of $\Omega$. We have the well-known Sobolev inequality
\begin{equation}
\|u-u_\Omega\|_{2^*}\leq c_\Omega\|\gu\|_2\ \ \textup{for all $u\in W^{1,2}(\Omega)$}.\label{soi}
\end{equation} 
\begin{prop}\label{interp}Assume that $N>2$ and (H2) holds.
	Take $\alpha\in(1, \frac{N}{N-2})$. For $k=0, 1, 2, \cdots$ define the sequences
	\begin{eqnarray}
	a_k&=&\frac{2N}{N-2}-2\left(\frac{N}{N-2}-\alpha\right)\left(\frac{N}{2}\right)^k,\\
	b_k&=&\frac{1-\left(\frac{2}{N}\right)^k}{1-\frac{2}{N}}.
	\end{eqnarray} Then there holds
	\begin{equation}
	\io|\gu|^{2\alpha}dx\leq\left(((2\alpha-2)c+1)c_\Omega\right)^{b_k}\left(\|\du\|_{2^*}\right)^{b_k}\left(\|\gu\|_{2}\right)^{b_k}
	\left(\io|\gu|^{a_k}dx\right)^{\left(\frac{2}{N}\right)^k}\label{int}
	\end{equation}
	for $k\in \{0, 1, \cdots\}$ with $a_k\geq 0$ and all $u\in W^{2, 2^*}(\Omega)$ with $\gu\cdot\nu=$
	on $\po$, where $c$ is the same as the one in (\ref{cz}).
\end{prop}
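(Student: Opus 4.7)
The plan is to induct on $k$. The base case $k=0$ is immediate: since $a_0 = 2\alpha$, $b_0 = 0$, and $(2/N)^0 = 1$, the inequality (\ref{int}) collapses to the identity $\io|\gu|^{2\alpha}dx \leq \io|\gu|^{2\alpha}dx$.

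The engine of the argument is a one-step estimate: for any smooth $u$ with $\gu\cdot\nu = 0$ on $\po$ and any $\beta \geq 1$,
\[
\io|\gu|^{2\beta}\,dx \leq ((2\beta - 2)c + 1)\, c_\Omega\, \|\gu\|_2\, \|\du\|_{2^*} \left(\io |\gu|^{N(\beta - 1)}\,dx\right)^{2/N}.
\]
I would derive this by writing $|\gu|^{2\beta} = \nabla(u - u_\Omega)\cdot(|\gu|^{2\beta - 2}\gu)$ and integrating by parts. The boundary term vanishes by the conormal condition, and expanding the divergence produces one integrand weighted by $(2\beta-2)|\gu|^{2\beta-4}(\nabla^2 u\,\gu)\cdot\gu$ and one weighted by $|\gu|^{2\beta-2}\du$. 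Controlling the first via $|(\nabla^2 u\,\gu)\cdot\gu|\leq|\nabla^2 u||\gu|^2$, then applying H\"older with exponents $(2^*, 2^*, N/2)$, the Sobolev bound (\ref{soi}) on $\|u-u_\Omega\|_{2^*}$, and the Calder\'on--Zygmund estimate (\ref{cz}) on $\|\nabla^2 u\|_{2^*}$, the claim follows. If $\gu$ may vanish, I would first regularize via $|\gu|^2 \mapsto |\gu|^2 + \varepsilon$, carry out the computation, and send $\varepsilon \to 0$. The inductive step from level $k$ to level $k+1$ is then an immediate substitution: since $a_{k+1} \geq 0$ is equivalent to $a_k \geq 2$ under the recursion $a_{k+1}=(N/2)a_k - N$, I apply the one-step estimate with $\beta = a_k/2$ to bound $\io|\gu|^{a_k}dx$ in terms of $\io|\gu|^{a_{k+1}}dx$, insert into the level-$k$ estimate, and note that $b_{k+1} = b_k + (2/N)^k$.

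The main obstacle --- and the reason the upper bound $\alpha < N/(N-2)$ is essential --- is the constant bookkeeping. At level $j$ the one-step estimate contributes a factor $((a_j - 2)c + 1)^{(2/N)^j}$, whereas (\ref{int}) claims the uniform factor $((2\alpha - 2)c + 1)^{b_k}$. Reconciling these requires $a_k \leq a_0 = 2\alpha$ for every $k \geq 0$. The recursion has the unique fixed point $a^* = 2N/(N-2)$, and the assumption $\alpha < N/(N-2)$ forces $a_0 < a^*$, which drives the sequence $\{a_k\}$ strictly downward and yields the needed monotonicity.
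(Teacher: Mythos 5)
Your proposal is correct and follows essentially the same route as the paper's proof: the same base case, the same integration-by-parts identity for $\textup{div}(|\gu|^{a_n-2}\gu)$, the same H\"older--Sobolev--Calder\'on--Zygmund chain for the one-step estimate, and the same use of the monotonicity $a_k \leq a_0 = 2\alpha$ to replace the level-dependent factor $(a_j-2)c+1$ by the uniform $(2\alpha-2)c+1$. Your remark that the recursion's fixed point is $a^* = 2N/(N-2)$ and that $\alpha < N/(N-2)$ puts $a_0$ below it is a clean explanation of the monotonicity that the paper states without elaboration.
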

\begin{proof} First we observe that $a_k$ is the solution of the difference equation
	\begin{equation}\label{int1}
	a_k=(a_{k-1}-2)\frac{N}{2}
	\end{equation}
	coupled with the initial condition 
	\begin{equation*}
	a_0=2\alpha.
	\end{equation*}
	Our assumption on $\alpha$ implies that $\{a_k\}$ is decreasing.  We use mathematical induction. Note that $b_0=0$. Thus if $k=0$, then  (\ref{int}) is trivially true. Assume that (\ref{int}) holds for some
	$k=n\geq 0$ and $a_{n+1}\geq 0$. In view of (\ref{int1}), we have $a_n\geq 2$.
	An elementary calculation shows that
	\begin{equation*}
	\textup{div}\left(|\gu|^{a_n-2}\gu\right)=(a_n-2)|\gu|^{a_n-4}(\nabla^2u\gu)\cdot\gu+|\gu|^{a_n-2}\du.
	\end{equation*}
	With this in mind, we estimate from \eqref{soi} and \eqref{cz} that
	\begin{eqnarray}
	\io|\gu|^{a_n}dx
	&=&\io|\gu|^{a_{n}-2}\gu\cdot\gu  dx\nonumber\\
	&=&\io|\gu|^{a_{n}-2}\gu\cdot\nabla (u-u_\Omega) dx\nonumber\\
	&=&-\io\textup{div}\left(|\gu|^{a_n-2}\gu\right) (u-u_\Omega)dx\nonumber\\
	&\leq&(a_n-2)\io|\gu|^{a_n-2}|\nabla^2u||u-u_\Omega|  dx+\io|\gu|^{a_n-2}|\du||u-u_\Omega|   dx\nonumber\\
	&\leq&\left((2\alpha -2)\|\nabla^2u\|_{2^*}+ \|\du\|_{2^*}\right)\left(\io|\gu|^{(a_n-2)\frac{2N}{N+2}}|u-u_\Omega|^{\frac{2N}{N+2}}  dx\right)^{\frac{N+2}{2N}}\nonumber\\
	&\leq&((2\alpha -2)c+1)\|\du\|_{2^*}\left(\io|\gu|^{(a_{n}-2)\frac{N}{2}}dx\right)^{\frac{2}{N}}\left(\io|u-u_\Omega|^{\frac{2N}{N-2}}  dx\right)^{\frac{N-2}{2N}}\nonumber\\
	&\leq &((2\alpha -2)c+1)c_\Omega\|\du\|_{2^*}\left(\io|\gu|^{2}  dx\right)^{\frac{1}{2}}\left(\io|\gu|^{a_{n+1}}dx\right)^{\frac{2}{N}}.
	\end{eqnarray}
	The last step is due to (\ref{soi}). By our assumption, (\ref{int}) is true for $k=n$. Therefore, we have
	\begin{eqnarray}
	\io|\gu|^{2\alpha}dx&\leq&\left(((2\alpha-2)c+1)c_\Omega\right)^{b_n}\left(\|\du\|_{2^*}\right)^{b_n}\left(\|\gu\|_{2}\right)^{b_n}	\left(\io|\gu|^{a_n}dx\right)^{\left(\frac{2}{N}\right)^n}\nonumber\\
	&\leq &   \left(((2\alpha-2)c+1)c_\Omega\right)^{b_n+\left(\frac{2}{N}\right)^n}\left(\|\du\|_{2^*}\right)^{b_n+\left(\frac{2}{N}\right)^n}\left(\|\gu\|_{2}\right)^{b_n+\left(\frac{2}{N}\right)^n}\nonumber\\
	&&\cdot	\left(\io|\gu|^{a_{n+1}}dx\right)^{\left(\frac{2}{N}\right)^{n+1}}\nonumber\\   
	&\leq&\left(((2\alpha-2)c+1)c_\Omega\right)^{b_{n+1}}\left(\|\du\|_{2^*}\right)^{b_{n+1}}\left(\|\gu\|_{2}\right)^{b_{n+1}}\nonumber\\
	&&\cdot\left(\io|\gu|^{a_{n+1}}dx\right)^{\left(\frac{2}{N}\right)^{n+1}} .                   
	\end{eqnarray}
	This completes the proof.
\end{proof}

In \cite{RS}, the authors established the following inequality
\begin{equation}
\int_{\mathbb{R}^N}\psi^{s+2}|\nabla u|^{s+2}dx\leq c\|u\|^2_{BMO}\left(\int_{\mathbb{R}^N}\psi^{s+2}|\nabla u|^{s-2}|\nabla^2u|^2dx+\|\nabla\psi\|_\infty^2\int_{\mathbb{R}^N}\psi^{s}|\nabla u|^{s}dx\right), 
\end{equation}
where $ s\geq 2$ and $\psi$ is a non-negative cut-off function in $C^\infty_0(\RN)$. This inequality is local in nature, and 
it does not seem to be applicable to our situation due to the Neumann
boundary conditions involved. Furthermore, in general, it is not easy to derive BMO bounds for solutions to a fourth-order nonlinear equation.
\begin{prop}\label{estim}Let (H1)-(H3) be satisfied and $u$ be a classical solution of (\ref{mp1})-(\ref{mp4}).
	The following statements hold true:
	\begin{enumerate}
		\item[\textup{(e1)}] If $\alpha\leq 1$ for each $T>0$ then there is a positive number $c$ determined only by the given data such that 
		\begin{equation}\label{e111}
		\max_{0\leq t\leq T}\io (u^2+|\gu|^2)dx+\int_{\Omega_{T}}\left(\Delta u\right)^2dxdt +\int_{\Omega_{T}}|\nabla\Delta u|^2dxdt\leq c;
		\end{equation}
		\item[\textup{(e2)}] Assume either \eqref{con1} or \eqref{con2}. Then
		there exists a positive number $T_0$ with
		\begin{eqnarray}
		\lefteqn{\max_{0\leq t\leq T_0}\io (u^2+|\gu|^2)dx+\int_{\Omega_{T_0}}\left(\Delta u\right)^2dxdt}\nonumber\\ &&+\int_{\Omega_{T_0}}|\nabla\Delta u|^2dxdt+\int_{\Omega_{T_0}}|\nabla u|^{2\alpha}dxdt\leq c.
		\end{eqnarray}
		Here  $c$ is also determined by the given data.
	\end{enumerate}
\end{prop}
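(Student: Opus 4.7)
The plan is to derive a priori bounds by testing (\ref{mp1}) first with $u$ and then with $-\du$, integrating over $\Omega$, and using (\ref{mp2})--(\ref{mp3}) to discard all boundary terms. Using $\gu\cdot\nu=0$ on $\po$ to obtain $\io(\du)^2\,dx=-\io\gu\cdot\nabla\du\,dx$ and $(\nabla\du-\f(\gu))\cdot\nu=0$ to drop the highest-order boundary contribution, the two tests give
\begin{align*}
\tfrac{1}{2}\tfrac{d}{dt}\|u\|_2^2+\|\du\|_2^2&=-\io\f(\gu)\cdot\gu\,dx,\\
\tfrac{1}{2}\tfrac{d}{dt}\|\gu\|_2^2+\|\nabla\du\|_2^2&=\io\f(\gu)\cdot\nabla\du\,dx.
\end{align*}
Young's inequality combined with (H1) turns the second identity into
\[
\tfrac{d}{dt}\|\gu\|_2^2+\|\nabla\du\|_2^2\le c\io|\gu|^{2\alpha}\,dx+c,
\]
and the whole argument reduces to controlling $\io|\gu|^{2\alpha}\,dx$ in terms of $\|\nabla\du\|_2$ and $\|\gu\|_2$.

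For (e1), since $\alpha\le 1$ one has $|\gu|^{2\alpha}\le|\gu|^2+1$ at once, so the previous display is a linear Gronwall inequality that gives a global-in-time bound on $\|\gu\|_2^2$ and, after integration, on $\iot|\nabla\du|^2\,dxdt$. Using $|\gu|^{\alpha+1}\le|\gu|^2+1$ in the first identity then delivers the bounds on $\|u\|_2$ and $\iot(\du)^2\,dxdt$ required by (\ref{e111}).

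For (e2) under (\ref{con1}), the key tool is Proposition \ref{interp}, which applies because $\frac{N^2+2N+4}{N^2}<\frac{N}{N-2}$ for every $N>2$. The idea is to pick the largest $k\in\{1,2\}$ for which $a_k\ge 0$: the recursion $a_k=(a_{k-1}-2)N/2$ with $a_0=2\alpha$ shows that $a_2\le 2$ precisely when $\alpha\le\frac{N^2+2N+4}{N^2}$, which is the hypothesis. Since the chosen $k$ satisfies $0\le a_k\le 2$, H\"older on the bounded $\Omega$ yields $\io|\gu|^{a_k}\,dx\le c\|\gu\|_2^{a_k}$, while the Sobolev embedding $W^{1,2}\hookrightarrow L^{2^*}$ combined with the by-parts identity $\|\du\|_2^2\le\|\gu\|_2\|\nabla\du\|_2$ gives $\|\du\|_{2^*}\le c\|\nabla\du\|_2+c\|\gu\|_2$. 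Substituting these into (\ref{int}) and using the easily-checked scaling identity $2b_k+a_k(2/N)^k=2\alpha$ leads to
\[
\io|\gu|^{2\alpha}\,dx\le C\bigl(\|\nabla\du\|_2+\|\gu\|_2\bigr)^{b_k}\|\gu\|_2^{2\alpha-b_k}.
\]
Since $b_k\le\frac{N+2}{N}<2$ for $N>2$, Young's inequality absorbs $\|\nabla\du\|_2^{b_k}$ into $\tfrac{1}{2}\|\nabla\du\|_2^2$, leaving a bound of the form $\io|\gu|^{2\alpha}\,dx\le\tfrac{1}{2}\|\nabla\du\|_2^2+c\|\gu\|_2^{2(1+\sigma)}+c$ with some $\sigma>0$ (because $\alpha>1$). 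Setting $y=\|\gu\|_2^2$, the second energy identity becomes $y'\le c\,y^{1+\sigma}+c$, and Lemma \ref{gron} produces a bound on a short interval $[0,T_0]$ depending only on the data. The remaining parts of (e2) then follow by integration and reinjection into the first identity. The case (\ref{con2}) with $N=2$ is handled in parallel using a Gagliardo--Nirenberg inequality of the shape $\|\gu\|_{2\alpha}^{2\alpha}\le c\|\gu\|_2^{\alpha+1}\|\nabla\du\|_2^{\alpha-1}$, where $\alpha<3$ is exactly what makes $\alpha-1<2$ and lets Young's close the loop.

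The main obstacle is to identify a value of $k$ in Proposition \ref{interp} for which $a_k\ge 0$ (so the inequality applies) and simultaneously $b_k<2$ (so the top-order term absorbs). The threshold $\frac{N^2+2N+4}{N^2}$ in (\ref{con1}) is exactly the sharp cutoff past which no $k$ can meet both demands, which is why it appears in the hypothesis.
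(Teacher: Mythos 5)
Your proposal is correct and, for (e1) and the $N>2$ case of (e2), is essentially the paper's argument: test (\ref{mp1}) with $u$ and $-\du$, bound $\io|\gu|^{2\alpha}\,dx$ through Proposition \ref{interp}, and close with the superlinear Gronwall bound of Lemma \ref{gron}. The one genuine departure is the $N=2$ subcase of (e2): you invoke the Gagliardo--Nirenberg inequality $\|\gu\|_{2\alpha}^{2\alpha}\le c\|\gu\|_2^{\alpha+1}\|\nabla\du\|_2^{\alpha-1}+c\|\gu\|_2^{2\alpha}$ (the second, lower-order term is unavoidable on a bounded domain but is harmless, being $cy^\alpha$), whereas the paper rederives a parametrized variant of Proposition \ref{interp} with $s^2/(s^2-1)$ replacing $N/(N-2)$ and lets $s\to 1^+$. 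Your route is shorter; the paper's keeps the two dimensions structurally parallel. Two smaller points of comparison are worth noting. First, you set $y=\|\gu\|_2^2$ and reinject into the $u$-energy identity afterward, while the paper works directly with $y=\io(u^2+|\gu|^2)\,dx$ so that both $\io(\du)^2$ and $\io|\nabla\du|^2$ sit on the favorable side of the combined differential inequality and absorb the $\varepsilon(\|\nabla\du\|_2^2+\|\du\|_2^2)$ term from (\ref{gue}) without passing through $\|\du\|_2^2\le\|\gu\|_2\|\nabla\du\|_2$; either route works. Second, your explicit rule to choose the largest $k\in\{1,2\}$ with $a_k\ge 0$ is a useful clarification: for $1<\alpha\le(N+2)/N$ one has $a_2<0$ so Proposition \ref{interp} cannot be applied with $k=2$ and $k=1$ must be used instead, a case the paper's discussion (which focuses on $k=2$) leaves implicit.
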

\begin{proof} 
	Multiply each term in (\ref{mp1}) by $u$ and then integrate the resulting equation over $\Omega$ to derive
	\begin{eqnarray}
	\frac{1}{2}\frac{d}{dt}\io u^2dx+\io\left(\Delta u\right)^2dx&=&-\io\f(\nabla u)\cdot\nabla udx\nonumber\\
	&\leq& c\io|\nabla u|^{1+\alpha}dx+c.\label{ap2}
	\end{eqnarray}
	Multiplying through (\ref{mp1}) by $\du$ and
	then integrating over $\Omega$, after some calculations we arrive at 
	\begin{eqnarray}
	\frac{1}{2}\frac{d}{dt}\io |\gu|^2dx+\io|\nabla\du|^2dx&=&\io\f(\gu)\nabla\du dx\nonumber\\
	&\leq& \frac{1}{2}\io|\nabla\du|^2dx+ \frac{1}{2}\io|\f(\gu)|^2dx\nonumber\\
	&\leq& \frac{1}{2}\io|\nabla\du|^2dx+ c\io|\gu|^{2\alpha}dx+c.\label{ap4}
	\end{eqnarray}
	If $\alpha\leq 1$, we can apply Gronwall's inequality to obtain
	\begin{equation}
	\max_{0\leq t\leq T}\io |\gu|^2dx+\iot|\nabla\du|^2dxdt\leq c.\label{ap5}
	\end{equation}
	This together with (\ref{ap2}) yields
	\begin{equation}
	\max_{0\leq t\leq T}\io u^2dx+\iot\left(\Delta u\right)^2dxdt\leq c.\label{ap3}
	\end{equation}
	This completes the proof of (e1).
	
	From here on, we assume \eqref{con1}.
	We easily deduce from \eqref{ap2} and \eqref{ap4} that 
	\begin{equation}
	\frac{d}{dt}\io \left(u^2+|\gu|^2\right)dx+\io\left(\du\right)^2dx+\io|\nabla\du|^2dx\leq c\io|\gu|^{2\alpha}dx+c.\label{ap6}
	\end{equation}
	To apply Proposition \ref{interp}, we choose $k$ so that
	\begin{eqnarray}
	b_k&<&2,\label{bk}\\
	a_k&\leq &2.\label{ak}
	\end{eqnarray}
	Then for each $\varepsilon>0$ we estimate
	\begin{eqnarray}
	\io|\gu|^{2\alpha}dx&\leq&\left((2\alpha-2)c+1)c_\Omega\right)^{b_k}\left(\|\du\|_{2^*}\right)^{b_k}\left(\|\gu\|_{2}\right)^{b_k}
	\left(\io|\gu|^{a_k}dx\right)^{\left(\frac{2}{N}\right)^k}\nonumber\\
	&\leq &c\left(\|\nabla\du\|_2+\|\du\|_2\right)^{b_k}\left(\|\gu\|_{2}\right)^{b_k}
	\left(\io|\gu|^{a_k}dx\right)^{\left(\frac{2}{N}\right)^k}\nonumber\\
	&\leq &\varepsilon\left(\|\nabla\du\|_2^2+\|\du\|_2^2\right)+c(\varepsilon)\left(\|\gu\|_{2}\right)^{\frac{2b_k}{2-b_k}}
	\left(\io|\gu|^{a_k}dx\right)^{\left(\frac{2}{N}\right)^k\frac{2}{2-b_k}}\nonumber\\
	&\leq &\varepsilon\left(\|\nabla\du\|_2^2+\|\du\|_2^2\right)+c(\varepsilon)
	\left(\io|\gu|^{2}dx\right)^{\frac{b_k}{2-b_k}+\left(\frac{2}{N}\right)^k\frac{a_k}{2-b_k}}.\label{gue}
	\end{eqnarray}
	Use this in \eqref{ap6} and choose $\varepsilon$ suitably small to derive
	\begin{equation}\label{yes}
	y^\prime\leq cy^{1+\sigma}+c,
	\end{equation} 
	where \begin{eqnarray}
	y(t)&=&\io\left(u^2+|\gu|^2\right)dx,\\
	1+\sigma&=&\frac{b_k}{2-b_k}+\left(\frac{2}{N}\right)^k\frac{a_k}{2-b_k}.\label{bk2}
	\end{eqnarray}
	It turns out that the condition \eqref{con1} is to ensure that \eqref{ak} and \eqref{bk} hold for $k=2$. To see this,  we have
	\begin{eqnarray}
	b_2&=&1+\frac{2}{N}<2,\\
	a_2&=& \frac{2N}{N-2}-2\left(\frac{N}{N-2}-\alpha\right)\left(\frac{N}{2}\right)^2\nonumber\\
	&=&\frac{N^2}{2}\alpha-\frac{N(N+2)}{2}\leq 2.
	\end{eqnarray}
	Use $b_2$ in \eqref{bk2} to see that $\sigma>0$. Consequently, we can
	apply Lemma \ref{gron} to obtain
	\begin{equation}\label{yt1}
	y(t)\leq \frac{1}{\left[\left(\left(v_0^{-\sigma}+1\right)e^{-\sigma c t}-1\right)^+\right]^{\frac{1}{\sigma}}}-1,
	\end{equation}
	where
	\begin{equation}
	v_0=\io\left(u_0^2+|\nabla u_0|^2\right)dx+1.
	\end{equation}
	Note that the right-hand side of \eqref{yt1} blows up only when $t$ reaches $\frac{\ln(v_0^{-\sigma}+1)}{\sigma c}$.
	It follows that for each $T_0\in (0, \frac{\ln(v_0^{-\sigma}+1)}{\sigma c})$ the function $y(t)$ is bounded on
	$[0, T_0]$. Equipped with this, we can combine \eqref{ap6} with \eqref{gue} to obtain the desired result.
	
	Now we assume \eqref{con2} holds. In this case, for each $s>1$ there is a positive number $c=c(s,\Omega)$ such that
	\begin{eqnarray}
	\|u-u_\Omega\|_{\frac{s}{s-1}}&\leq& c(s,\Omega)\|\gu\|_2,\\
	\|\du\|_{\frac{s}{s-1}}&\leq& c(s,\Omega)\left(\|\nabla\du\|_2+\|\du\|_2\right).
	\end{eqnarray}
	We can easily infer from the proof of Proposition \ref{interp} that
	\begin{equation}
	\io|\gu|^{2\alpha}dx\leq\left(((2\alpha-2)c+1)c(s,\Omega)\right)^{b_k}\left(\|\du\|_{2^*}\right)^{b_k}\left(\|\gu\|_{2}\right)^{b_k}
	\left(\io|\gu|^{a_k}dx\right)^{\frac{1}{s^{2k}}},
	\end{equation} 
	where
	\begin{eqnarray}
	a_k&=&\frac{2s^2}{s^2-1}-2\left(\frac{s^2}{s^2-1}-\alpha\right)s^{2k},\\
	b_k&=&\frac{1-\frac{1}{s^{2k}}}{1-\frac{1}{s^2}},\ \ k=0, 1, 2, \cdots .
	\end{eqnarray} 
	Note that $b_2=1+\frac{1}{s^2}<2$. The condition $a_2\leq 2$ leads to
	\begin{equation}
	\alpha\leq \frac{s^4+s^2+1}{s^4}\uparrow 3\ \ \textup{as $s\rightarrow 1^+$.}
	\end{equation} Thus if $\alpha<3$ we can always find an $s>1$ such that 
	\begin{equation}
	\alpha\leq \frac{s^4+s^2+1}{s^4}<3.
	\end{equation}
	The rest is entirely similar to the argument after \eqref{yt1}.
\end{proof}	
\begin{rem}
	If $\f$ is conservative, i.e., \eqref{cons} holds, then we can use $\partial_tu$ as a test function in
	\eqref{mp1} and thereby derive
	\begin{equation}
	\io(\partial_tu)^2dx+\frac{1}{2}\frac{d}{dt}\io\left(\du\right)^2dx+\frac{d}{dt}\io\phi(\gu)dx=0.
	\end{equation}
	In particular, if $\f(\xi)=|\xi|^{\alpha-1}\xi$, $\alpha>0$, there is a positive number $c$ depending
	only on $u_0$ and $\alpha$ such that
	\begin{equation}
	\iot(\partial_tu)^2dxdt+\max_{0\leq t\leq T}\left(\io\left((\du)^2+|\gu|^{\alpha+1}\right)dx\right)\leq c\ \ \textup{for each $T>0$}.
	\end{equation}
	In this case, our analysis implies that \eqref{mp1}-\eqref{mp4} has a global weak solution for each $\alpha>0$, provided that we slightly modify the definition of a weak solution.
\end{rem}

\section{Approximate problems.}\label{sec4}
To design our approximate problems, we introduce a new unknown function $\psi$ so that
\begin{equation}
-\Delta u+\tau u=\psi,\label{exi0}
\end{equation}
where $1>\tau>0$. 
Obviously, the addition of the term $\tau u$ is due to the Neumann boundary condition (\ref{mp2}).
Note that we have $\psi=-\du$ as $\tau\rightarrow 0$. 
To approximate (\ref{mp1}), 
we employ an implicit discretization scheme for the time variable in (\ref{mp1}). This leads
to the consideration of
the following system of two second-order elliptic equations
\begin{eqnarray}
-\Delta {\psi}+\tau\psi -\textup{div}\ft(\gu)&=&- \frac{u-v}{\tau}\ \ \ \textup{in $\Omega$},\label{exi1}\\
-\Delta u+\tau u&=&\psi\ \ \ \textup{in $\Omega$}\label{exi2}
\end{eqnarray}
coupled with the boundary conditions
\begin{eqnarray}
\gu\cdot\nu&=& 0\ \ \ \textup{on $\partial\Omega$},\label{exi3}\\
\left(\nabla \psi+\ft(\gu)\right)\cdot\nu&=&0\ \ \ \textup{on $\partial\Omega$},\label{exi4}
\end{eqnarray}
where  $v$ is either the initial datum $u_0$ or the $u$ component of the solution $(u, \psi)$ obtained in a preceding step in the scheme. The number $\tau$ can be viewed as the step size.
The term $\tau\psi$ in (\ref{exi1}) is also needed due to the Neumann boundary condition.

The construction of our approximate solutions will be based upon this problem.

\begin{prop}\label{app}
	Let $\f(\xi)$ be a continuous function on $\RN$, and assume that (H2) holds and $v\in L^\infty(\Omega)$.
	Then there is a 
	weak solution $(\psi, u)$ to (\ref{exi1})-(\ref{exi4}) with $\psi\in W^{1,p}(\Omega), u\in W^{2,p}(\Omega)$ for
	each $p>1$. 
\end{prop}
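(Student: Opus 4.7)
The plan is to construct the solution via Schauder's (or Schaefer's) fixed-point theorem. Let $X := C^1(\overline\Omega)$. For each $w \in X$ I freeze the nonlinearity in \eqref{exi1} by replacing $\f(\gu)$ with $\f(\nabla w)$; eliminating $\psi$ using \eqref{exi2}, i.e.\ $\psi = -\du + \tau u$, reduces the resulting coupled system to the single linear fourth-order elliptic equation
\begin{equation*}
(-\Delta + \tau I)^2 u + u/\tau = v/\tau + \textup{div}\,\f(\nabla w) \ \ \textup{in $\Omega$,}
\end{equation*}
subject to $\gu\cdot\nu = 0$ and $(\nabla\du - \f(\nabla w))\cdot\nu = 0$ on $\po$. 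The associated bilinear form $B(u,\eta) = \io \du\,\Delta\eta + 2\tau\,\gu\cdot\nabla\eta + (\tau^2+1/\tau)u\eta\,dx$ is coercive on the Neumann subspace of $W^{2,2}(\Omega)$ (coercivity in the full $W^{2,2}$ norm follows from the Calder\'on--Zygmund bound $\|\nabla^2 u\|_2 \le c(\|\du\|_2 + \|u\|_2)$, a special case of \eqref{cz}), so Lax--Milgram furnishes a unique weak solution $u \in W^{2,2}(\Omega)$. I define $T: X \to X$ by $T(w) := u$.

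Next I verify that $T$ is compact and continuous and that a fixed point produces the regularity asserted. Since $\f$ is continuous and $w \in C^1(\overline\Omega)$, the datum $\f(\nabla w)$ lies in $L^\infty(\Omega)$; together with $v \in L^\infty$, a bootstrap of \eqref{cz} applied successively to the two decoupled second-order problems $-\Delta u + \tau u = \psi$ and $-\Delta\psi + \tau\psi = -(u-v)/\tau + \textup{div}\,\f(\nabla w)$ delivers $u \in W^{2,p}(\Omega)$ and $\psi \in W^{1,p}(\Omega)$ for every $p > 1$. Sobolev embedding then gives $u \in C^{1,\beta}(\overline\Omega)$ for some $\beta>0$, so $T$ lands in a Hölder space and is compact by Arzela--Ascoli. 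Its continuity comes from continuity of the Nemytskii map $w \mapsto \f(\nabla w)$ combined with continuous dependence of Lax--Milgram solutions on data.

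The remaining task is to exhibit either an invariant ball for $T$ or an a priori bound for all $u$ satisfying $u = \sigma Tu$ with $\sigma\in[0,1]$. Testing the corresponding equation with $u$ yields
\begin{equation*}
\io(\du)^2\,dx + 2\tau\io|\gu|^2\,dx + (\tau^2+1/\tau)\io u^2\,dx + \sigma\io \f(\gu)\cdot\gu\,dx = \frac{\sigma}{\tau}\io vu\,dx,
\end{equation*}
and the principal obstacle is controlling $\io \f(\gu)\cdot\gu\,dx$ in the absence of any prescribed growth on $\f$. The natural workaround is to truncate: set $\f_M(\xi) := \f(\xi)\chi(|\xi|/M)$ for a cutoff $\chi \in C_c^\infty([0,\infty))$ equal to $1$ near the origin. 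Each $\f_M$ is bounded continuous, so Schauder applies directly and produces a solution $u_M$. A Moser-type iteration that exploits the dominant zeroth-order term $u/\tau$ and the bound $v \in L^\infty$ should then yield $\|\nabla u_M\|_\infty \le C$ independently of $M$, whence $\f_M(\nabla u_M) = \f(\nabla u_M)$ once $M$ exceeds this bound and $u_M$ is a solution of the original untruncated system. Establishing this uniform gradient estimate is where the main technical work will lie.
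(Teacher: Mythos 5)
Your fixed-point set-up is genuinely different from the paper's, and the difference is exactly where your argument runs into trouble. The paper applies the Leray--Schauder theorem to a map $B: L^\infty(\Omega)\to L^\infty(\Omega)$ acting on the \emph{auxiliary} variable: given $w\in L^\infty(\Omega)$, first solve $-\Delta u+\tau u=w$ with $\gu\cdot\nu=0$, and only \emph{then} plug the resulting $u$ into \eqref{exi1} to produce $\psi=B(w)$. The crucial consequence of ordering the two second-order solves this way is that $u$ always comes out of a linear Neumann problem with an $L^\infty$ right-hand side, so by Calder\'on--Zygmund and Sobolev embedding $u\in W^{2,p}(\Omega)$ for all $p$, hence $u\in C^{1,\beta}(\overline\Omega)$ and $\f(\gu)$ is a bounded continuous function \emph{automatically}, with no growth hypothesis, no truncation, and no need for a uniform gradient estimate. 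The a priori bound $\|\psi\|_\infty\le c$ then comes from testing \eqref{ot9} with $|\psi|^{p-2}\psi$ and with $\psi$, combined with the classical $L^\infty$ estimate for second-order equations and a finite bootstrap in $p$.

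By contrast, you freeze $\f(\nabla w)$, eliminate $\psi$, and solve a \emph{single fourth-order} problem for $u$; your fixed point is $u$ itself. The Lax--Milgram / bootstrap / Arzel\`a--Ascoli parts of your argument are fine. But when you come to the Schaefer a priori bound, you hit $\io\f(\gu)\cdot\gu\,dx$ with $\gu$ being the unknown of the equation you are solving, and since $\f$ is merely continuous with no growth restriction, this term simply cannot be absorbed. Your proposed remedy --- truncate to $\f_M$ and prove $\|\nabla u_M\|_\infty\le C$ uniformly in $M$ by Moser iteration --- is not a minor technicality you can defer: it is the entire difficulty, and it is far from clear that such a bound holds. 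The zeroth-order term $u/\tau$ and the bound $v\in L^\infty$ give you $L^2$-type energy control, but getting an $M$-independent $L^\infty$ bound on $\nabla u_M$ from a fourth-order equation (where there is no maximum principle to lean on, as the paper itself emphasizes elsewhere) with a right-hand side that is $\mathrm{div}\,\f_M(\nabla u_M)$ is precisely the sort of estimate that fails without structural assumptions on $\f$. The paper's reordering of the two solves is the idea you are missing: it removes the need for any such estimate.
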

\begin{proof}
	A solution will be constructed 
	via the Leray-Schauder Theorem
	(\cite{GT}, p. 280). To apply this theorem,
	we define an operator $B$ from $L^\infty(\Omega)$
	into itself as follows: for each $w\in L^\infty(\Omega)$ we say $B(w)=\psi$ if $\psi$ solves the equation
	\eqref{exi1} with the boundary condition \eqref{exi4},
	in which $u$ is the solution of \eqref{exi2} coupled with \eqref{exi3}.
	To see that $B$ is well-defined, we observe from the classical existence and regularity theory for linear elliptic equations 
	that (\ref{exi2})-(\ref{exi3}) has a unique weak solution $u$ which can be shown to lie in the space $W^{2,p}(\Omega)$ for 
	each $p>1$. Thus
	$u\in C^{1,\beta}(\overline{\Omega})$ for some $\beta\in(0, 1)$ and $\f(\gu)$ is a bounded, continuous function on $\overline{\Omega}$. 
	This enables us to conclude that (\ref{exi1}) combined with (\ref{exi4}) also has a unique solution $\psi$ in the space $W^{1,2}(\Omega)\cap C^\gamma(\overline{\Omega})$ for
	some $\gamma\in (0,1)$. 
	In view of these results, we can claim that $B$ is well-defined, continuous, and maps bounded sets into precompact ones. It remains to show that there is a positive number $c$ such that
	\begin{equation}
	\|\psi\|_\infty\leq c\label{ot8}
	\end{equation}
	for all $\psi\in L^\infty(\Omega)$ and $\sigma\in (0,1]$ satisfying
	$$\psi=\sigma B(\psi).$$
	This equation is equivalent to the boundary value problem
	\begin{eqnarray}
	-\textup{div}\left(\nabla\psi+\sigma\ft(\gu)\right)+\tau\psi  &=&-\sigma\frac{u-v}{\tau} \ \ \ \textup{in $\Omega$},\label{ot9}\\
	-\Delta u+ \tau u&=& \psi \ \ \ \textup{in $\Omega$},\label{ot10}\\
	\gu\cdot\nu&=& 0\ \ \ \textup{on $\partial\Omega$,}\\
	\left(\nabla\psi+\sigma\ft(\gu)\right)\cdot\nu&=& 0\ \ \ \textup{on $\partial\Omega$}.
	\end{eqnarray}
	Let $p>2$ be given. We easily check  that the function $|\psi|^{p-2} \psi$ lies in $W^{1,2}(\Omega)$ and $\nabla\left(|\psi|^{p-2} \psi\right)=(p-1)|\psi|^{p-2}\nabla\psi$. Multiply through (\ref{ot9}) by
	this function and integrate the resulting equation over $\Omega$ to obtain
	\begin{eqnarray*}
		(p-1)\int_\Omega|\psi|^{p-2}|\nabla\psi|^2\, dx
		+\tau\int_\Omega|\psi|^p\, dx&=&-\frac{\sigma}{\tau}\int_\Omega(u-v)|\psi|^{p-2} \psi \, dx\nonumber\\
		&&-\sigma(p-1)\io |\psi|^{p-2}\nabla\psi\ft(\gu)\, dx\\
		&\leq &\frac{1}{\tau}\|u-v\|_p\|\psi\|_p^{p-1}+\frac{1}{2}(p-1)\int_\Omega|\psi|^{p-2}|\nabla\psi|^2\, dx\nonumber\\
		&&+\frac{1}{2}(p-1)\int_\Omega|\psi|^{p-2}|\ft(\gu)|^2\, 
		dx,
	\end{eqnarray*}
	from whence follows
	\begin{equation}
	\|\psi\|_p\leq c\|u\|_p +c.\label{at11}
	\end{equation}
	Here $c$ depends on $\tau, p, \f, v$, but not $\sigma$.
	Next we claim that 
	\begin{equation}
	\|\psi\|_2\leq c\|v\|_2+c.\label{at22}
	\end{equation}
	Upon using $\psi$ as a test function in (\ref{ot9}), we obtain
	\begin{equation}
	\int_\Omega|\nabla\psi|^2\, dx
	+\tau\int_\Omega\psi^2\, dx
	=-\frac{\sigma}{\tau}\int_\Omega u\psi \, dx
	+\frac{\sigma}{\tau}\int_\Omega v\psi \, dx-\sigma\io\nabla\psi\ft(\gu)\, dx.\label{ot11}
	\end{equation}
	By using $u$ as a test function in (\ref{ot10}), we can derive
	\begin{equation}\label{at44}
	\int_\Omega u\psi \, dx
	=\int_\Omega|\nabla u|^2\, dx
	+\tau\int_\Omega u^2 \, dx\geq 0.
	\end{equation}
	Keeping this in mind, we deduce from (\ref{ot11}) that
	$$\tau\int_\Omega\psi^2 \, dx
	\leq \frac{\sigma}{\tau}\int_\Omega v\psi \, dx+\frac{1}{2}\io |\ft(\gu)|^2\, dx
	\leq \frac{1}{\tau}\|v\|_2\|\psi\|_2+c.$$
	Then \eqref{at22} follows. 
	
	We can also infer from the classical $L^\infty$ estimate (\cite{GT}, p.189) that for each $p>\frac{N}{2}$ there is a positive
	number $c$ such that
	\begin{equation}\label{at33}
	\|\psi\|_\infty\leq c\|u-v\|_p+c\|\ft(\gu)\|_{2p}+c\|\psi\|_2\leq c\|u\|_p+c.
	\end{equation}
	
	Equipped with \eqref{at11}, \eqref{at22}, and \eqref{at33}, we can employ a bootstrap argument to obtain (\ref{ot8}). 
	Square both sides of (\ref{ot10}) and integrate to obtain
	$$\int_\Omega(\Delta u)^2dx+\tau^2\int_\Omega u^2dx+2\tau\int_\Omega|\nabla u|^2dx=\int_\Omega\psi^2dx.$$
	This together with \eqref{cz} and \eqref{at44}
	implies that \begin{equation}
	\|u\|_{W^{2,2}(\Omega)}\leq c.\label{ot13}
	\end{equation}
	If this is enough to yield that
	\begin{equation}
	\|u\|_p\leq c \ \ \textup{for some $p>\frac{N}{2}$},\label{ot14}
	\end{equation}
	then \eqref{ot8} follows from \eqref{at33}.
	If (\ref{ot13}) is not
	enough for (\ref{ot14}) to hold, by the Sobolev Embedding Theorem we must have that $N\geq 8$ and
	$$\|u\|_{\frac{2N}{N-4}}\leq c.$$
	In view of \eqref{cz} and \eqref{at11}, we have
	$$\|u\|_{W^{2,\frac{2N}{N-4}}(\Omega)}\leq c.$$
	Does this imply (\ref{ot14})? If it does, we are done. If not, repeat the above argument.
	Obviously, we can reach (\ref{ot14}) in a finite number of steps.
	%
	
	Finally, we can conclude from the classical regularity theory that $\psi\in W^{1,p}(\Omega)$ for each $p>1$.
	The proof is complete.
\end{proof}

For a similar approach we refer the reader to Proposition 2.1 in \cite{LX}.
\section{Proof of Theorem \ref{mthm}.}\label{sec5}
The proof of Theorem \ref{mthm} will be divided into several steps. To begin, we form a sequence
of approximate solutions. This is based upon Proposition \ref{app}. Then
we proceed to derive estimates similar to the ones in Proposition \ref{estim} for the sequence. It turns out that 
%
these estimates are 
sufficient to justify passing to the limit.

Let $T>0$ be given. For each $j\in\{1,2,\cdots,\}$ we divide the time interval $[0,T]$ into $j$ equal subintervals. Set
$$\tau=\frac{T}{j}.$$
We discretize  (\ref{mp1})-(\ref{mp4}) as follows. For $k=1,\cdots, j$, we solve recursively the system
\begin{eqnarray}
\frac{u_k-u_{k-1}}{\tau}-\textup{div}\left(\nabla\sk+\ft(\nabla\uk)\right)+\tau\sk&=&0\ \ \ \textup{in $\Omega$},\label{s31}\\
-\Delta\uk+\tau\uk&=&\sk\ \ \ \textup{in $\Omega$},\label{s32}\\
\left(\nabla\sk+\ft(\nabla\uk)\right)\cdot\nu=\nabla\uk\cdot\nu&=&0\ \ \ \textup{on $\partial\Omega$}.\label{s33}
\end{eqnarray}
Without loss of generality, we may assume that $u_0\in W^{1,2}(\Omega)\cap L^\infty(\Omega)$. Otherwise, one picks a sequence $\{u_{0j}\}$ from the space with the property
\begin{equation}
u_{0j}\rightarrow u_0\ \ \textup{strongly in $ W^{1,2}(\Omega)$ and weak$^*$ in $L^\infty(\Omega)$.}
\end{equation}
Introduce the functions
\begin{eqnarray}
\utj(x,t)&=&\frac{t-t_{k-1}}{\tau}\uk(x)+(1-\frac{t-t_{k-1}}{\tau})\uko(x), \ x\in\Omega,  \ t\in(t_{k-1},t_k],\\
\ubj(x,t)&=&\uk(x), \ \ \ x\in\Omega, \ \ t\in(t_{k-1},t_k],\\
\sbj(x,t)&=&\skx, \ \ \ x\in\Omega, \ \ t\in(t_{k-1},t_k],
\end{eqnarray}
where $t_k=k\tau$. We can rewrite (\ref{s31})-(\ref{s33}) as
\begin{eqnarray}
\frac{\partial\utj}{\partial t}-\textup{div}\left(\nabla\sbj+\ft(\nabla\ubj)\right)+\tau\sbj&=&0\ \ \ \textup{in $\omt$},\label{omm1}\\
-\Delta\ubj+\tau\ubj &=&\sbj \ \ \ \textup{in $\omt$}.\label{omm2}
\end{eqnarray}
We proceed to derive a priori estimates for the sequence of approximate solutions $\{\utj,\ubj,\sbj\}$. The discretized version of Proposition \ref{estim} is the following
\begin{prop}\label{mdisc}Let (H1)-(H3) be satisfied. The following statements hold true:
	\begin{enumerate}
		\item[\textup{(d1)}] If $\alpha\leq 1$ then for each $T>0$ there is a positive number $c$ independent of $j$, hence $\tau$, such that
		\begin{eqnarray}\label{discm}
		\lefteqn{	\max_{0\leq t\leq T}\left(	\io(\ubj^2+|\nabla\ubj|^2)dx+\tau\io\ubj^2dx\right)}\nonumber\\
		&&+\int_{\Omega_T}(\sbj^2+|\nabla\sbj|^2)dxdt+\tau\int_{\Omega_T}\sbj^2dxdt\leq c;
		\end{eqnarray}
		\item[\textup{(d2)}] If either \eqref{con1} or \eqref{con2} holds, then there is a positive number $T_0$ with
		\begin{eqnarray}\label{discm2}
		\lefteqn{	\max_{0\leq t\leq T_0}\left(	\io(\ubj^2+|\nabla\ubj|^2)dx+\tau\io\ubj^2dx\right)}\nonumber\\
		&&+\int_{\Omega_{T_0}}(\sbj^2+|\nabla\sbj|^2)dxdt+\tau\int_{\Omega_{T_0}}\sbj^2dxdt+\int_{\Omega_{T_0}}|\nabla\ubj|^{2\alpha}dxdt\leq c.
		\end{eqnarray}
		Here $c$ has the same meaning as the one in (d1).
	\end{enumerate}
\end{prop}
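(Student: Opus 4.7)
The plan is to mirror the derivation in Proposition~\ref{estim} line by line at the discrete level. The implicit-in-time scheme \eqref{s31}-\eqref{s33} was designed precisely so that the two test-function calculations producing \eqref{ap2} and \eqref{ap4} transfer to the discrete setting, with time integrals replaced by sums in $k$. Throughout, set $y_k = \|u_k\|_2^2 + \|\nabla u_k\|_2^2$.

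First I would test \eqref{s31} with $\tau u_k$ and integrate by parts. Using \eqref{s32} to rewrite $\io \nabla \psi_k \cdot \nabla u_k \, dx = \|\psi_k\|_2^2 - \tau \io \psi_k u_k \, dx$ makes the two occurrences of $\tau^2 \io \psi_k u_k\,dx$ cancel exactly, and the algebraic identity $(u_k - u_{k-1}) u_k = \tfrac{1}{2}(u_k^2 - u_{k-1}^2) + \tfrac{1}{2}(u_k - u_{k-1})^2$ then delivers the discrete analog of \eqref{ap2}. Next, testing \eqref{s31} with $\tau \psi_k$ and substituting $\psi_k = -\Delta u_k + \tau u_k$ into $(u_k-u_{k-1})\psi_k$ produces, after a second integration by parts against the Neumann conditions \eqref{s33}, the telescoping quantity $\tfrac{1}{2}(\|\nabla u_k\|_2^2 - \|\nabla u_{k-1}\|_2^2)$; a Young inequality on $\tau\io \f(\nabla u_k)\cdot\nabla\psi_k\,dx$ absorbs half of $\tau\|\nabla\psi_k\|_2^2$, yielding the discrete analog of \eqref{ap4}. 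Adding the two gives the master inequality
\[
y_k - y_{k-1} + c_1\tau\bigl(\|\psi_k\|_2^2 + \|\nabla\psi_k\|_2^2\bigr) \leq c\tau\io\bigl(|\nabla u_k|^{2\alpha} + |\nabla u_k|^{1+\alpha}\bigr) dx + c\tau.
\]

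For (d1), since $\alpha\leq 1$ the right-hand side is dominated by $c\tau(y_k + 1)$, and a standard discrete Gronwall argument produces $\max_k y_k \leq c$ uniformly in $\tau$; summing over $k$ then yields the $L^2(\Omega_T)$ bounds on $\bar\psi_j$ and $\nabla\bar\psi_j$ required by \eqref{discm}. For (d2), I would feed Proposition~\ref{interp} applied to $u_k$ into the right-hand side; the factor $\|\Delta u_k\|_{2^*}$ appearing there is handled via \eqref{s32} and Sobolev embedding by $\|\Delta u_k\|_{2^*}\leq\|\psi_k\|_{2^*}+\tau\|u_k\|_{2^*}\leq c(\|\nabla\psi_k\|_2+\|\psi_k\|_2)+c\tau\|u_k\|_{W^{1,2}(\Omega)}$. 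Choosing the iteration index $k=2$ under \eqref{con1} (or a suitable $s$ close to $1$ under \eqref{con2}) and arguing as in \eqref{gue} absorbs the $\psi_k$ contribution into the left-hand side, leaving
\[
y_k - y_{k-1} \leq c\tau y_k^{1+\sigma} + c\tau.
\]

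The main obstacle lies in this last step. Unlike the continuous case, where Lemma~\ref{gron} directly integrates $y'\leq c_1 y^{1+\sigma}+c_2$, the discrete inequality is implicit in $y_k$. I would close it by induction: fix $M>2(y_0+1)$ and choose $T_0$ small enough that $cT_0 M^{1+\sigma}+cT_0\leq M/2$ and $c\tau M^\sigma\leq 1/2$; the estimate then rearranges to $y_k(1-c\tau y_k^\sigma)\leq y_{k-1}+c\tau$, and under the induction hypothesis $y_{k-1}\leq M$ this propagates to $y_k\leq M$ for every $k$ with $k\tau\leq T_0$. Crucially, $T_0$ depends only on the data $u_0$ and the constants in the energy inequalities, \emph{not} on $\tau$, so summing the master inequality over such $k$ and combining with the interpolation estimate above delivers every bound in \eqref{discm2} uniformly in $\tau$, which is exactly what is needed for the limit passage in Section~\ref{sec5}.
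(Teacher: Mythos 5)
Your scheme of test functions ($u_k$ then $\psi_k$), the use of \eqref{s32} to convert $\nabla\psi_k\cdot\nabla u_k$ into $\|\psi_k\|_2^2$, the telescoping identity coming from $\psi_k=-\Delta u_k+\tau u_k$, and the way you feed Proposition \ref{interp} into the right-hand side of the master inequality by replacing $\Delta u_k$ with $\tau u_k-\psi_k$ all match the paper's proof step for step, and they yield exactly the discrete analog \eqref{disc5} of \eqref{ap6}. The one place you diverge is the closing argument, and this is where your proposal has a gap.

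Your final step is an induction on the implicit inequality $y_k(1-c\tau y_k^\sigma)\leq y_{k-1}+c\tau$. The difficulty is that the map $y\mapsto y-c\tau y^{1+\sigma}$ is \emph{not} monotone: it increases up to $y^*=(c\tau(1+\sigma))^{-1/\sigma}$ and decreases thereafter, even turning negative. Consequently, $y_{k-1}\leq M$ and $y_k(1-c\tau y_k^\sigma)\leq M+c\tau$ admit solutions $y_k$ both in a small interval near $M$ \emph{and} on a second, large branch of order $(c\tau)^{-1/\sigma}$; nothing in the implicit Euler step rules the second branch out a priori, since $u_k$ is produced by a nonlinear elliptic solve and there is no continuity in $k$ to invoke. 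Your side conditions $cT_0M^{1+\sigma}+cT_0\leq M/2$ and $c\tau M^\sigma\leq 1/2$ control the small branch but say nothing about the large one, so the implication ``$y_{k-1}\leq M\Rightarrow y_k\leq M$'' does not follow as stated. To close it you would have to supply an extra ingredient, e.g.\ a crude $\tau$-dependent bound on $y_k$ from the existence theory of Proposition \ref{app} showing $y_k\ll(c\tau)^{-1/\sigma}$, or an argument ruling out the large branch by some continuity or contradiction device.

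The paper sidesteps this cleanly by not iterating on $z_k=\io(u_k^2+|\nabla u_k|^2)dx$ directly. Instead it multiplies the step-$k$ inequality by $\tau$, sums, and introduces the \emph{continuous} accumulated quantity $y_j(s)=\int_0^s\bigl(\io(\ubj^2+|\nabla\ubj|^2)dx\bigr)^{1+\sigma}dt$, which is absolutely continuous in $s$ with $y_j(0)=0$. The summed inequality then reads $y_j'(s)\leq c\,y_j^{1+\sigma}(s)+c$, to which the ODE comparison Lemma \ref{gron} applies: because $y_j$ is continuous and starts at $0$, it simply cannot leap to the large branch, and one gets a uniform bound on $[0,T_0]$ with $T_0$ depending only on $v_0$, $\sigma$, $c$. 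I would recommend replacing your discrete induction with this continuous-accumulation device, which was precisely the purpose of proving Lemma \ref{gron} in Section \ref{sec2}. The rest of your proposal, including the treatment of (d1) and the substitution $\|\Delta u_k\|_{2^*}\leq\|\psi_k\|_{2^*}+\tau\|u_k\|_{2^*}$ via \eqref{s32}, is in order.
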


\begin{proof}
	Multiply through \eqref{s31} by $\uk$ and integrate the resulting equation over $\Omega$ to obtain
	\begin{equation}\label{disc1}
	\frac{1}{\tau}\io (\uk-u_{k-1})\uk dx+\io\left(\nabla\sk\nabla\uk+\tau\sk\uk\right)dx=-\io\ft(\nabla\uk)\nabla\uk dx.
	\end{equation}
	Similarly, multiply each term in \eqref{s32} by $\sk$ and integrate to get
	\begin{equation}
	\io\left(\nabla\sk\nabla\uk+\tau\sk\uk\right)dx=\io\psi_k^2dx.
	\end{equation}
	Use this, (H1), and the inequality $(\uk-u_{k-1})\uk\geq\frac{1}{2}(\uk^2-u_{k-1}^2)$ in \eqref{disc1} to derive
	\begin{equation}
	\frac{1}{2\tau}\io(\uk^2-u_{k-1}^2)dx+\io\psi_k^2dx\leq c\io|\nabla\uk|^{\alpha+1}dx +c.\label{disc2}
	\end{equation}
	Upon using $\sk$ as a test function in \eqref{s31}, we arrive at
	\begin{eqnarray}
	\lefteqn{\frac{1}{\tau}\io(\uk-u_{k-1})\sk dx+\io|\nabla\sk|^2dx+\tau\io\psi_k^2 dx}\nonumber\\
	&=&-\io\ft(\nabla\uk)\nabla\sk dx\nonumber\\
	&\leq&\frac{1}{2}\io|\nabla\sk|^2dx+\frac{1}{2}\io|\ft(\nabla\uk)|^2dx\nonumber\\
	&\leq&\frac{1}{2}\io|\nabla\sk|^2dx+c\io|\nabla\uk|^{2\alpha}dx+c.\label{disc3}
	\end{eqnarray}
	We calculate from \eqref{s32} that
	\begin{eqnarray}
	\frac{1}{\tau}\io(\uk-u_{k-1})\sk dx&=&\frac{1}{\tau}\io(\uk-u_{k-1})(-\Delta\uk+\tau\uk) dx\nonumber\\
	&=&\frac{1}{\tau}\io(\nabla\uk-\nabla u_{k-1})\nabla\uk dx+\io(\uk-u_{k-1})\uk dx\nonumber\\
	&\geq&\frac{1}{2\tau}\io(|\nabla\uk|^2-|\nabla u_{k-1}|^2)dx+\frac{1}{2}\io(\uk^2-u_{k-1}^2)dx.
	\end{eqnarray}
	Combining this with \eqref{disc3} yields
	\begin{eqnarray}
	\lefteqn{\frac{1}{\tau}\io(|\nabla\uk|^2-|\nabla u_{k-1}|^2)dx+\io(\uk^2-u_{k-1}^2)dx}\nonumber\\
	&&+\io|\nabla\sk|^2dx+\tau\io\psi_k^2 dx
	\leq c\io|\nabla\uk|^{2\alpha}dx+c.\label{disc4}
	\end{eqnarray}
	The case where $\alpha\leq 1$ is easy to handle. From now on, we will operate under the assumption \eqref{con1}. Add \eqref{disc4} to \eqref{disc2} to obtain
	\begin{eqnarray}
	\lefteqn{\frac{1}{\tau}\io(u_k^2-u_{k-1}^2)dx+\frac{1}{\tau}\io(|\nabla\uk|^2-|\nabla u_{k-1}|^2)dx}\nonumber\\
	&&+\io(\uk^2-u_{k-1}^2)dx+\io\psi_k^2dx+\io|\nabla\sk|^2dx+\tau\io\psi_k^2 dx\nonumber\\
	&	\leq &c\io|\nabla\uk|^{2\alpha}dx+c.\label{disc5}
	\end{eqnarray}
	By \eqref{gue}, for each $\varepsilon>0$ there is a positive number
	$c(\varepsilon)$ such that
	\begin{eqnarray}
	\io|\nabla\uk|^{2\alpha}dx&\leq& \varepsilon\io\left(\left(\nabla\Delta\uk\right)^2+\left(\Delta\uk\right)^2\right)dx+c(\varepsilon)\left(\io|\nabla\uk|^2dx\right)^{1+\sigma}\nonumber\\
	&\leq& \varepsilon\io\left(\left(\nabla(\tau\uk-\sk)\right)^2+\left(\tau\uk-\sk\right)^2\right)dx+c(\varepsilon)\left(\io|\nabla\uk|^2dx\right)^{1+\sigma}\nonumber\\
	&\leq& 2\varepsilon\io\left(|\nabla\sk|^2+\psi_k^2\right)dx+2\varepsilon\tau^2\io\left(|\nabla\uk|^2+u_k^2\right)dx\nonumber\\
	&&+c(\varepsilon)\left(\io|\nabla\uk|^2dx\right)^{1+\sigma},\label{disc10}
	\end{eqnarray}
	where $\sigma$ is a positive number determined by $N, \alpha$. Use this in \eqref{disc5} and choose $\varepsilon$ suitably small to derive
	\begin{eqnarray}
	\lefteqn{\frac{1}{\tau}\io(u_k^2-u_{k-1}^2)dx+\frac{1}{\tau}\io(|\nabla\uk|^2-|\nabla u_{k-1}|^2)dx}\nonumber\\
	&&+\io(\uk^2-u_{k-1}^2)dx+\io\psi_k^2dx+\io|\nabla\sk|^2dx+\tau\io\psi_k^2 dx\nonumber\\
	&	\leq &c\varepsilon\tau^2\io\left(|\nabla\uk|^2+u_k^2\right)dx
	+c(\varepsilon)\left(\io|\nabla\uk|^2dx\right)^{1+\sigma}+c.
	\end{eqnarray}
	Multiply through this inequality by $\tau$ and sum up the resulting inequality over $k$ to derive
	\begin{eqnarray}
	\lefteqn{\io(\ubj^2+|\nabla\ubj|^2)dx+\tau\io\ubj^2dx+\int_{\Omega_s}(\sbj^2+|\nabla\sbj|^2)dxdt+\tau\int_{\Omega_s}\sbj^2dxdt}\nonumber\\
	&\leq&c\tau^2\int_{\Omega_s}(\ubj^2+\nabla\ubj|^2)dxdt+c\int_{0}^{s}\left(\io|\nabla\ubj|^2dx\right)^{1+\sigma}dt+c\nonumber\\
	&\leq&c\int_{0}^{s}\left(\io(\ubj^2+|\nabla\ubj|^2)dx\right)^{1+\sigma}dt+c,
	\end{eqnarray}
	where $s\geq 0$.	Set 
	\begin{equation}
	y_j(s)=\int_{0}^{s}\left(\io(\ubj^2+|\nabla\ubj|^2)dx\right)^{1+\sigma}dt.
	\end{equation}
	We easily infer from the preceding inequality that
	\begin{equation}
	y_j^\prime(s)\leq cy_j^{1+\sigma}(s)+c.
	\end{equation}
	Thus we are in a position to apply Lemma \ref{gron} to obtain
	\begin{equation}
	y_j(s)\leq \frac{1}{\left[\left(((y_j(0)+1)^{-\sigma}+1)e^{-c\sigma t}-1\right)^+\right]^{\frac{1}{\sigma}}}-1.
	\end{equation}
	As before, we can conclude from this, \eqref{disc10}, and \eqref{disc5} that there exists a positive $T_0$ such that \eqref{discm} is true. The case where \eqref{con2} holds can be handled
	in a similar way. The proof is complete.
\end{proof}

{\it Proof of Theorem \ref{mthm}.} 
Here we draw some inspiration from \cite{X}. We only deal with the case where \eqref{con1} holds because the other two cases are similar. Proposition \ref{mdisc} together with \eqref{omm1}, \eqref{cz} and \eqref{omm2}
implies
\begin{enumerate}
	\item[(C1)] the sequence $\{\ubj\}$ is bounded in $L^\infty(0,T_0;W^{1,2}(\Omega))$;
	\item[(C2)]
	the sequence $\{\ubj\}$ is bounded in $ L^2(0,T_0;W^{2,2}(\Omega))$;
	\item[(C3)]
	the sequence $\{\Delta\ubj\}$ is bounded in $ L^2(0,T_0;W^{1,2}(\Omega))$; and
	\item[(C4)]
	the sequence  $\{|\f(\nabla\uk)|\}$ is bounded in $ L^2(\Omega_{T_0})$.
\end{enumerate}

Now we proceed to show that the sequence $\{\ubj\}$ is precompact in $L^2(0,T_0;W^{1,2}(\Omega))$. It is easy to see from (C1) that the sequence $\{\utj\}$ is bounded in $L^\infty(0,T_0;W^{1,2}(\Omega))$. We can also conclude from (C4), Proposition \ref{mdisc}, and \eqref{omm1} that the sequence $\{\partial_t\utj\}$ is bounded in $L^2(0,T_0;(W^{1,2}(\Omega))^*)$, where $(W^{1,2}(\Omega))^*$ is the dual space of $W^{1,2}(\Omega)$.	This puts us in a position to invoke a result in \cite{S}, from which follows that $\{\utj\}$ is precompact in both $L^2(0,T_0;(W^{1,2}(\Omega))^*)$ and $C([0,T_0]; L^2(\Omega))$.
For $t\in(t_{k-1}, t_k]$, we calculate from \eqref{omm1} that
\begin{eqnarray*}
	\utj(x,t)-\ubj(x,t)&=&\frac{t-t_k}{\tau}(\uk-\uko)\\
	&=&(t-t_k)\frac{\partial}{\partial t}\utj(x,t)\nonumber\\
	&=&(t-t_k)\left(\textup{div}\left(\nabla\sk+\f(\nabla\uk)\right)-\tau\sk\right).
\end{eqnarray*}
Consequently, we can estimate
\begin{eqnarray}
\int_{0}^{T_0}\|\utj-\ubj\|^2_{(W^{1,2}(\Omega))^*}dt&=&\sum_{k=1}^{k=j}\int_{t_{k-1}}^{t_k}\|\utj-\ubj\|^2_{(W^{1,2}(\Omega))^*}dt\nonumber\\
&=&\sum_{k=1}^{k=j}\int_{t_{k-1}}^{t_k}(t_k-t)^2\|\textup{div}\left(\nabla\sk+\f(\nabla\uk)\right)-\tau\sk\|^2_{(W^{1,2}(\Omega))^*}dt\nonumber\\
&=&\sum_{k=1}^{k=j}\tau^3\|\textup{div}\left(\nabla\sk+\f(\nabla\uk)\right)-\tau\sk\|^2_{(W^{1,2}(\Omega))^*}\nonumber\\	&=&\tau^2\int_{0}^{T_0}\|\textup{div}\left(\nabla\sbj+\f(\nabla\ubj)\right)-\tau\sbj\|^2_{(W^{1,2}(\Omega))^*}dt\nonumber\\
&\leq& c\tau^2,\label{fin1}
\end{eqnarray}
from whence follows that $\{\ubj\}$ is precompact in $L^2(0,T_0;(W^{1,2}(\Omega))^*)$. This together with (C1) and a
result in \cite{S} asserts that $\{\ubj\}$ is precompact in $L^2(\ot)$.
We compute
\begin{eqnarray}
\int_{\Omega_{T_0}}|\nabla(\rbj-\bar{u}_i)|^2\, dxdt
&=&\int_{\Omega_{T_0}}(\Delta \rbj-\Delta\bar{u}_i )(\rbj-\bar{u}_i)\, dxdt\nonumber\\
&\leq& c\left(\int_{\Omega_{T_0}}(\rbj-\bar{u}_i)^2\, dxdt\right)^{\frac{1}{2}}
\end{eqnarray}
for all $i,j$.  Hence $\{\ubj\}$ is precompact in $L^2(0,T_0; W^{1,2}(\Omega))$.

Select a subsequence of $j$, still denoted by $j$, such that 
\begin{eqnarray}
\ubj &\rightarrow & u\ \ \textup{strongly in $L^2(0,T_0; W^{1,2}(\Omega))$},\\
\nabla\ubj &\rightarrow & \nabla u\ \ \textup{a.e. on $\ot$},\label{fin2}\\
\utj &\rightarrow & \tilde{u}\ \ \textup{strongly in $C([0,T_0]; L^2(\Omega))$}.
\end{eqnarray}
Since $\f$ is continuous, (C4) combined with \eqref{fin2} implies
\begin{equation*}
\f(\nabla\ubj)\rightharpoonup\f(\nabla u)\ \ \textup{weakly in $L^2(0,T_0; (L^2(\Omega))^N)$}.
\end{equation*}
We can deduce from \eqref{fin1} that
\begin{equation*}
u=\tilde{u} \ \ \textup{a.e. on $\Omega_{T_0}$.}
\end{equation*}
We are ready to pass to the limit in \eqref{omm1} and \eqref{omm2}, from which follows the theorem. The proof
is finished.
\section{Proof of Theorem \ref{mthm2}.} \label{sec6}
We begin with the proof of (R3) in Theorem \ref{mthm2}.

\noindent {\it Proof of (R3). }
Let $\theta(s)$ be a function in $C_0^\infty(\R)$ satisfying
\begin{equation}
\theta(s)=s\ \ \textup{on $(-1, 1)$.}
\end{equation}
Then denote by $\h(\nabla u)$ the function $(g_1(\nabla u_0)+\theta(g_1(\gu)-g_1(\nabla u_0)),\cdots, g_N(\nabla u_0)+\theta(g_N(\gu)-g_N(\nabla u_0)))^T$. Note that
\begin{equation}\label{the}
\h(\nabla u)=\f(\nabla u)\ \ \textup{on the set where $|\f(\gu)-\f(\nabla u_0)|\leq 1$.}
\end{equation}	On the other hand, since $\theta$ is bounded and $\f$ is continuous, there is a positive number $M$ with
\begin{equation}\label{hb}
\sup_{\rnt}|\h(\gu)|\leq M\ \ \textup{for each $u\in L^1(0,\infty; W^{1,1}(\RN))$}.
\end{equation} 	
Consider the initial value problem
\begin{eqnarray}
\partial_tu+\dus&=&\textup{div}\h(\nabla u)\ \ \textup{in $\RN\times(0,\infty)$,}\label{agp1}	\\
u(x,0)&=&u_0(x)\ \ \textup{on $\RN$.}\label{agp2}
\end{eqnarray}	
Without loss of generality, we may assume that $\f, u_0$ are sufficiently smooth. Otherwise, we replace them by their respective mollifications and then pass to the limit.
We conclude from a standard argument (see, e.g., \cite{GP} or (\cite{T}, Chap. 15)) that \eqref{agp1}-\eqref{agp2} has a classical solution $u$ and that
$u$ can be represented in the form
\begin{equation}
u=v_0+v_1,
\end{equation} 	
where \begin{eqnarray*}
	v_0(x,t)&=&b_N( t)*u_0(x)\equiv\alpha_Nt^{-\frac{N}{4}}\irn\fn\left(\frac{|x-y|}{t^{\frac{1}{4}}}\right)u_0(y)dy,\\
	v_1(x,t)&=&\int_{0}^{t}\nabla b_N( t-s)*\h(\gu)(x)ds\nonumber\\
	&&\equiv\alpha_N\int_{0}^{t}(t-s)^{-\frac{N+1}{4}}\irn\fn^\prime\left(\frac{|x-y|}{(t-s)^{\frac{1}{4}}}\right)\frac{x-y}{|x-y|}\h(\gu(y, s))dyds.
\end{eqnarray*}
Whenever no confuse arises, we suppress the dependence of a function on its independent variables. 
\begin{clm}\label{clm1}If $u_0\in C^1(\RN)$ with $\nabla u_0$ being uniformly continuous and bounded on $\RN$, then $\lim_{t\rightarrow 0}|\nabla v_0(x,t)-\nabla u_0(x)|=0$ uniformly on $\RN$.
\end{clm}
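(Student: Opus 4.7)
The plan is to differentiate the representation of $v_0$ under the integral sign, shift the gradient onto $u_0$ via integration by parts, and then treat $\nabla v_0(x,t)$ as a convolution of $\nabla u_0$ against $b_N(\cdot,t)$, which behaves as an approximate identity despite $b_N$ being signed.

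First I would write $v_0(x,t)=\irn b_N(x-y,t)u_0(y)\,dy$. Because $\nabla_x b_N(x-y,t)=-\nabla_y b_N(x-y,t)$, and because the Gaussian-type decay of $b_N$ inherited from (F2)--(F3) dominates the at-most-linear growth of $u_0$ (which is guaranteed by $\|\nabla u_0\|_\infty<\infty$), integration by parts in $y$ produces no boundary contribution at infinity and yields
\begin{equation*}
\nabla v_0(x,t)=\irn b_N(x-y,t)\nabla u_0(y)\,dy.
\end{equation*}
Combined with the normalization \eqref{norm}, which gives $\irn b_N(x-y,t)\,dy=1$, this rewrites as
\begin{equation*}
\nabla v_0(x,t)-\nabla u_0(x)=\irn b_N(x-y,t)\bigl[\nabla u_0(y)-\nabla u_0(x)\bigr]\,dy.
\end{equation*}

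Next, given $\varepsilon>0$, I would use the uniform continuity of $\nabla u_0$ to pick $\delta>0$ so that $|\nabla u_0(y)-\nabla u_0(x)|<\varepsilon$ whenever $|y-x|<\delta$, and split the integral on the right at this threshold. The near piece is bounded by $\varepsilon\irn|b_N(z,t)|\,dz$, and the polar-coordinates change of variable used in \eqref{stan}, combined with (F2), shows
\begin{equation*}
\irn|b_N(z,t)|\,dz=\alpha_N\omega_N\int_0^\infty|\fn(\eta)|\eta^{N-1}\,d\eta\equiv K_0<\infty,
\end{equation*}
independent of $t$. For the far piece, the trivial bound $|\nabla u_0(y)-\nabla u_0(x)|\leq 2\|\nabla u_0\|_\infty$ and the same scaling yield
\begin{equation*}
\int_{|z|\geq\delta}|b_N(z,t)|\,dz=\alpha_N\omega_N\int_{\delta/t^{1/4}}^\infty|\fn(\eta)|\eta^{N-1}\,d\eta,
\end{equation*}
which tends to $0$ as $t\to 0^+$ by (F2). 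Since neither of these two estimates depends on $x$, we conclude $|\nabla v_0(x,t)-\nabla u_0(x)|\leq K_0\varepsilon+o(1)$ uniformly on $\RN$, and the claim follows on letting $\varepsilon\to 0$.

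The argument is essentially the classical mollifier computation, adapted to a signed kernel. The one point that requires a moment of care is the integration by parts at the outset: one must verify that the spherical boundary terms on $\partial B(0,R)$ vanish as $R\to\infty$ despite $u_0$ itself potentially being unbounded. This is straightforward because the rapid decay of $b_N$ given by (F2) easily overwhelms the at-most-linear growth of $u_0$ on large balls.
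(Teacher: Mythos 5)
Your proof is correct and follows essentially the same route as the paper's: pass the gradient from $b_N$ onto $u_0$ to write $\nabla v_0 = b_N(\cdot,t)*\nabla u_0$, subtract $\nabla u_0(x)$ using the normalization \eqref{norm}, and then exploit (F2) together with the boundedness and uniform continuity of $\nabla u_0$. The paper carries out the scaling $z=(x-y)/t^{1/4}$ and leaves the final estimate to the reader, whereas you spell out the near/far splitting (equivalently, a dominated-convergence argument); these are the same computation. Your remark on the vanishing of boundary terms in the integration by parts, justified by the rapid decay from (F2) against the at-most-linear growth of $u_0$, is a welcome bit of extra care that the paper passes over in silence.
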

\begin{proof} 
	We easily derive from the definition of $v_0$ and \eqref{norm} that
	\begin{eqnarray}
	\nabla v_0(x,t)&=&\alpha_Nt^{-\frac{N}{4}}\irn\fn\left(\frac{|x-y|}{t^{\frac{1}{4}}}\right)\nabla u_0(y)dy\nonumber\\
	&=&\alpha_Nt^{-\frac{N}{4}}\irn\fn\left(\frac{|x-y|}{t^{\frac{1}{4}}}\right)(\nabla u_0(y)-\nabla u_0(x))dy+\nabla u_0(x)\nonumber\\
	&=&\alpha_N\irn\fn\left(|z|\right)(\nabla u_0(x-t^{\frac{1}{4}}z)-\nabla u_0(x))dz+\nabla u_0(x).\label{6.9}
	\end{eqnarray}
	The claim follows
	from (F2) and the boundedness and uniform continuity of $ \nabla u_0$ on $\RN$.
\end{proof}

In view of the calculations in (\cite{LSU}, p. 263-264), we can compute
\begin{eqnarray}
\nabla v_1
&=&\alpha_N\int_{0}^{t}(t-s)^{-\frac{N+2}{4}}\irn\fn^{\prime\prime}\left(\frac{|x-y|}{(t-s)^{\frac{1}{4}}}\right)\frac{(x-y)\otimes(x-y)}{|x-y|^2}\h(\gu(y, s))dyds\nonumber\\
&&+\alpha_N\int_{0}^{t}(t-s)^{-\frac{N+2}{4}}\irn\fn^{\prime}\left(\frac{|x-y|}{(t-s)^{\frac{1}{4}}}\right)\nabla\frac{x-y}{|x-y|}\h(\gu(y, s))dyds
\end{eqnarray}
We can easily check that\begin{equation}
\left|\nabla\frac{x-y}{|x-y|}\right|\leq \frac{N^2}{|x-y|}.
\end{equation}
Keeping this, (F3), \eqref{stan2}, and \eqref{hb} in mind, we arrive at	
\begin{eqnarray}
|\nabla v_1|
&\leq&c\int_{0}^{t}(t-s)^{-\frac{N+2}{4}}\irn\left|\fn^{\prime\prime}\left(\frac{|x-y|}{(t-s)^{\frac{1}{4}}}\right)\right|dyds\nonumber\\
&&+c\int_{0}^{t}(t-s)^{-\frac{N+2}{4}}\irn\left|f_{N+2}\left(\frac{|x-y|}{(t-s)^{\frac{1}{4}}}\right)\right|
dyds\nonumber\\
&\leq&c\int_{0}^{t}(t-s)^{-\frac{1}{2}}ds= ct^{\frac{1}{2}}.\label{loc2}
\end{eqnarray}
With the aid of Claim \ref{clm1}, \eqref{loc2}, and the continuity of $\f$, we can find a positive number $T$ such that
\begin{equation}
|\f(\gu)-\f(\nabla u_0)|\leq 1\ \ \textup{on $\RN\times(0,T)$}.
\end{equation}
This together with \eqref{the} implies
\begin{equation}
\partial_tu+\dus=\textup{div}\f(\nabla u)\ \ \textup{in $\RN\times(0,T)$.}
\end{equation}
This completes the proof of (R3).

Before we prove (R4), we present some preliminaries. For this purpose we construct a sequence $\{\wk\}$ by successively solving
\begin{eqnarray}
\partial_t\wk+\ds\wk&=&\textup{div}\left(\f(\nabla\wko)\right)\  \  \textup{in $\RN\times(0,\infty)$,}\label{agp11}\\
\wk(x,0)&=& u_0(x)\   \  \textup{on $\RN,\  \  k=1, 2, \cdots$,}\label{agp12}
\end{eqnarray}where
\begin{equation}\label{wvo}
w_0=v_0.
\end{equation} 
As before, we have
\begin{eqnarray}
\wk(x,t)&=&\alpha_N\int_{0}^{t}(t-s)^{-\frac{N+1}{4}}\int_{\RN}\fn^\prime\left(\frac{|x-y|}{(t-s)^{\frac{1}{4}}}\right)\frac{x-y}{|x-y|}\f(\nabla\wko(y,s))dyds\nonumber\\
&&+v_0(x,t).
\end{eqnarray}
\begin{lem}
	For each $p\geq 1$ there is a positive number $c=c(p, N)$ such that
	\begin{equation}\label{lem11}
	\|\nabla v_0(\cdot, t)\|_\infty\leq ct^{-\frac{N+p}{4p}}\|u_0\|_p \ \ \textup{for $t>0$}.
	\end{equation}
	The above inequality is still valid if $p=\infty$. In this case, $c$ only depends on $N$.
\end{lem}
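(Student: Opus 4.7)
The plan is to realize $\nabla v_0$ as a convolution and apply Young's inequality together with the $L^q$-bound on derivatives of the biharmonic kernel that is recorded in \eqref{stan2}. Specifically, from the definition
\begin{equation*}
v_0(x,t)=\alpha_N t^{-N/4}\irn f_N\!\left(\tfrac{|x-y|}{t^{1/4}}\right)u_0(y)\,dy = b_N(\cdot,t)\ast u_0(x),
\end{equation*}
I would differentiate under the integral and use (F3) to write
\begin{equation*}
\nabla v_0(x,t)=\alpha_N t^{-(N+1)/4}\irn f_N'\!\left(\tfrac{|x-y|}{t^{1/4}}\right)\tfrac{x-y}{|x-y|}\,u_0(y)\,dy = \nabla b_N(\cdot,t)\ast u_0(x).

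\end{equation*}

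Next, let $p'$ be the conjugate exponent of $p$. Applying Young's inequality \eqref{youn} with $q=\infty$, $r=p'$ gives
\begin{equation*}
\|\nabla v_0(\cdot,t)\|_\infty \leq \|\nabla b_N(\cdot,t)\|_{p'}\,\|u_0\|_p.
\end{equation*}
The key step is to estimate $\|\nabla b_N(\cdot,t)\|_{p'}$. Using the bound \eqref{stan2} for the $n=1$ derivative with $q=p'$, one obtains
\begin{equation*}
\|\nabla b_N(\cdot,t)\|_{p'}^{p'} \leq c\, t^{-(N+1)p'/4}\cdot t^{N/4},
\end{equation*}
which, after using $\tfrac{1}{p'}=\tfrac{p-1}{p}$, simplifies to
\begin{equation*}
\|\nabla b_N(\cdot,t)\|_{p'} \leq c\, t^{-\frac{N+1}{4}+\frac{N}{4p'}} = c\, t^{-\frac{N+p}{4p}}.
\end{equation*}
Combining the two displays yields \eqref{lem11}.

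For the endpoint $p=\infty$, I would instead invoke Young's inequality in the form \eqref{youn1} with $r=1$: $\|\nabla v_0(\cdot,t)\|_\infty \leq \|\nabla b_N(\cdot,t)\|_1 \|u_0\|_\infty$, and then apply \eqref{stan2} with $q=1$ to get $\|\nabla b_N(\cdot,t)\|_1\leq c\,t^{-1/4}$, which is exactly the $p=\infty$ limit of $t^{-(N+p)/4p}$; the constant then depends only on $N$ since $p'=1$ does not involve $p$. No real obstacle is expected here — everything reduces to the exponent bookkeeping described above, with the main point being the straightforward use of \eqref{stan2} and Young's inequality.
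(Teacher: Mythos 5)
Your proposal is correct and takes essentially the same approach as the paper: both recognize $\nabla v_0$ as the convolution of $u_0$ with the derivative of the biharmonic kernel, apply Young's inequality for convolutions to split off $\|u_0\|_p$, and then invoke \eqref{stan2} to bound the remaining kernel norm, with the same exponent bookkeeping. (The only minor remark is that \eqref{stan2} is stated for $q>1$, while your $p=\infty$ endpoint uses $q=1$; the same computation plainly extends to $q=1$, and the paper handles its own boundary case $p=1$ by the same offhand observation.)
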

\begin{proof} Let $p>1$ be given and set $q=\frac{p}{p-1}$.
	By Young's inequality for convolutions, we have
	\begin{eqnarray}
	|\nabla v_0(x,t)|&=& \alpha_Nt^{-\frac{N+1}{4}}\left|\irn\fn^\prime\left(\frac{|x-y|}{t^{\frac{1}{4}}}\right)\frac{x-y}{|x-y|}u_0(y)dy\right|\nonumber\\
	&\leq&\alpha_Nt^{-\frac{N+1}{4}}\|u_0\|_p\left(\irn\left|\fn^\prime\left(\frac{|y|}{t^{\frac{1}{4}}}\right)\right|^qdy\right)^{\frac{1}{q}}\nonumber\\
	&\leq &ct^{-\frac{N+1}{4}+\frac{N}{4q}}\|u_0\|_p.\label{u1e2}
	\end{eqnarray}
	The last step is due to \eqref{stan2}.
	If $p=1$, the above calculations still work.
	This completes the proof.
\end{proof}

\noindent{\it Proof of (R4) in Theorem \ref{mthm2}.}
Fix $T>0$. Let
\begin{equation}
b_k=\max_{\RN\times [0,T]}|t^{\frac{1}{2(\alpha-1)}}\nabla\wk(x,t)|,\  \   k=0, 1, 2, \cdots.
\end{equation}
By a calculation similar to \eqref{loc2}, we have , for $(x,t)\in \RN\times [0,T]$, that
\begin{eqnarray}
|\nabla w_k|&\leq&c\int_{0}^{t}(t-s)^{-\frac{N+2}{4}}\int_{\RN}\left|\fn^{\prime\prime}\left(\frac{|x-y|}{(t-s)^{\frac{1}{4}}}\right)\right||\nabla\wko|^\alpha dyds\nonumber\\
&&+c\int_{0}^{t}(t-s)^{-\frac{N+2}{4}}\int_{\RN}\left|f_{N+2}\left(\frac{|x-y|}{(t-s)^{\frac{1}{4}}}\right)\right||\nabla\wko|^\alpha 
dyds+|\nabla v_0|\nonumber\\
&\leq&cb_{k-1}^\alpha\int_{0}^{t}(t-s)^{-\frac{N+2}{4}}s^{-\frac{\alpha}{2(\alpha-1)}}\int_{\RN}\left|\fn^{\prime\prime}\left(\frac{|x-y|}{(t-s)^{\frac{1}{4}}}\right)\right|dyds\nonumber\\
&&+cb_{k-1}^\alpha\int_{0}^{t}(t-s)^{-\frac{N+2}{4}}s^{-\frac{\alpha}{2(\alpha-1)}}\int_{\RN}\left|f_{N+2}\left(\frac{|x-y|}{(t-s)^{\frac{1}{4}}}\right)\right|
dyds+|\nabla v_0|\nonumber\\
&\leq&cb_{k-1}^\alpha\int_{0}^{t}(t-s)^{-\frac{1}{2}}s^{-\frac{\alpha}{2(\alpha-1)}}ds+|\nabla v_0|.\label{small2}
\end{eqnarray}
Multiply through this inequality by $t^{\frac{1}{2(\alpha-1)}} $ and note from our assumptions on $\alpha$ that
\begin{equation}
t^{\frac{1}{2(\alpha-1)}}\int_{0}^{t}(t-s)^{-\frac{1}{2}}s^{-\frac{\alpha}{2(\alpha-1)}}ds\leq c \ \ \textup{for all $t>0$}
\end{equation}
to derive
\begin{equation}
|t^{\frac{1}{2(\alpha-1)}} \nabla\wk(x,t)|\leq 	|t^{\frac{1}{2(\alpha-1)}} \nabla v_0(x,t)|+cb_{k-1}^\alpha=b_0+cb_{k-1}^\alpha\  \  \textup{for $(x,t)\in \RN\times [0,T]$.}
\end{equation}
The last step is due to \eqref{wvo}.
Taking $p=\frac{(\alpha-1)N}{3-\alpha}$ in \eqref{lem11}, we deduce 
\begin{equation}
b_0=|t^{\frac{1}{2(\alpha-1)}} \nabla v_0(x,t)|\leq  c\|u_0\|_{\frac{(\alpha-1)N}{3-\alpha}}.
\end{equation}
By Lemma \ref{small}, we have 
\begin{equation}
\max_{\RN\times [0,T]}|t^{\frac{1}{2(\alpha-1)}}\nabla\wk(x,t)|=b_k\leq c,\label{small4}
\end{equation}
provided that $\|u_0\|_{\frac{(\alpha-1)N}{3-\alpha}} $ is sufficiently small. From here on we assume that this is the case. Note that  the preceding inequality holds for $T=\infty$ because the constant $c$ on its right-hand side  does not depend on $T$. Recall
\begin{equation}
\nabla v_0(x,t)=\alpha_Nt^{-\frac{N}{4}}\irn\fn\left(\frac{|x-y|}{t^{\frac{1}{4}}}\right)\nabla u_0(y)dy,
\end{equation}
from whence follows
\begin{equation}
\max_{\RN}|	\nabla v_0(x,t)|\leq c\|\nabla u_0\|_\infty\ \   \textup{for each $t>0$.}\label{small3}
\end{equation}
For $\tau>0$ we set 
\begin{equation}
a_k(\tau )=\max_{\RN\times [0,\tau]}|\nabla\wk(x,t)|,\ \ \ k=0, 1, 2, \cdots.
\end{equation}
We infer from \eqref{loc2}, \eqref{wvo}, and \eqref{small2} that
\begin{equation}
a_k(\tau )\leq a_0(\tau)+c\tau^{\frac{1}{2}}	a_{k-1}^\alpha(\tau ), \ \ k=1,2,\cdots.
\end{equation}
It follows from from \eqref{small3} that 
\begin{equation}
a_0(\tau)\leq c\|\nabla u_0\|_\infty.
\end{equation}Invoking Lemma \ref{small} again, we see that for $\tau$ small enough there holds
\begin{equation}
a_k(\tau )\leq c \ \ \textup{for all $k$}.
\end{equation}
This combined with \eqref{small4} asserts that 
\begin{equation}
\max_{\RN\times [0,T]}|\nabla\wk(x,t)|\leq c.
\end{equation}
Observe that the constant $c$ here is independent of $T$. We actually have
\begin{equation}
\max_{\RN\times [0,\infty)}|\nabla\wk(x,t)|\leq c.
\end{equation}
Since $\g$ is locally Lipschitz, we can find a positive number $c$ such that
\begin{equation}
|\g(\nabla\wk)-\g(\nabla\wko)|\leq c|\nabla\wk-\nabla\wko|\ \ \textup{for each $k\geq 1$.}
\end{equation}
Set
\begin{equation}
d_k=\max_{\RN\times [0,T]}|\nabla\wk(x,t)-\nabla\wko(x,t)|.
\end{equation}
For $k=2,3, \cdots$ and $(x,t)\in \RN\times [0,T]$, we compute
\begin{eqnarray}
|\nabla w_k-\nabla w_{k-1}|&\leq&c\int_{0}^{t}(t-s)^{-\frac{N+2}{4}}\int_{\RN}\left|\fn^{\prime\prime}\left(\frac{|x-y|}{(t-s)^{\frac{1}{4}}}\right)\right||\nabla w_{k-1}-\nabla w_{k-2}| dyds\nonumber\\
&&+c\int_{0}^{t}(t-s)^{-\frac{N+2}{4}}\int_{\RN}\left|f_{N+2}\left(\frac{|x-y|}{(t-s)^{\frac{1}{4}}}\right)\right||\nabla w_{k-1}-\nabla w_{k-2}|
dyds\nonumber\\
&\leq&cd_{k-1}\int_{0}^{t}(t-s)^{-\frac{N+2}{4}}\int_{\RN}\left|\fn^{\prime\prime}\left(\frac{|x-y|}{(t-s)^{\frac{1}{4}}}\right)\right|dyds\nonumber\\
&&+cd_{k-1}\int_{0}^{t}(t-s)^{-\frac{N+2}{4}}\int_{\RN}\left|f_{N+2}\left(\frac{|x-y|}{(t-s)^{\frac{1}{4}}}\right)\right|
dyds\nonumber\\
&\leq&cd_{k-1}\int_{0}^{t}(t-s)^{-\frac{1}{2}}ds=cd_{k-1}t^{\frac{1}{2}}.\label{conv1}
\end{eqnarray}
That is, we have
\begin{equation}
d_k\leq cT^{\frac{1}{2}}d_{k-1}\leq \cdots\leq \left( cT^{\frac{1}{2}}\right)^{k-1}d_1.
\end{equation}
Choose $T$ so that
\begin{equation}\label{cont}
cT^{\frac{1}{2}}<1.
\end{equation}Then the series 
\begin{equation}
\nabla w_0+\nabla w_1-\nabla w_0+\nabla w_2-\nabla w_1+\cdots+\nabla\wk-\nabla\wko+\cdots
\end{equation}
is uniformly convergent on $\RN\times [0,T]$. Thus the whole sequence $\{\nabla\wk\}$ converges strongly in $C(\RN\times [0,T])$.
By a simple translation in the $t$-variable, we can show that  for each $t_0>0$ the sequence $\{\nabla\wk\}$ converges strongly in $C(\RN\times [t_0,t_0+T])$, where $T$ is given as in \eqref{cont}, as long as  $\{\nabla\wk(x,t_0)\}$ converges strongly in
$C(\RN)$. This immediately implies that $\{\nabla\wk\}$ converges strongly in $C(\RN\times [0,s])$ for each $s>0$. Thus we can take
$k\rightarrow\infty$ in \eqref{agp11} to obtain the desired result. This completes the proof of (R4).



\end{document}